\newcommand{\cosimp}[3]{\xymatrix@1{#1 \ar@<.4ex>[r] \ar@<-.4ex>[r] & {\ }#2 \ar@<0.8ex>[r] \ar[r] \ar@<-.8ex>[r] & {\ } #3 \ar@<1.2ex>[r] \ar@<.4ex>[r] \ar@<-.4ex>[r] \ar@<-1.2ex>[r] & \cdots }}
\newcommand{\colim}{\mathop{\mathrm{colim}}}
\newcommand{\adjunction}[4]{\xymatrix@1{#1{\ } \ar@<0.3ex>[r]^{ {\scriptstyle #2}} & {\ } #3 \ar@<0.3ex>[l]^{ {\scriptstyle #4}}}}
\renewcommand{\baselinestretch}{1.1}
\begin{document}

\newtheorem{theorem}{Theorem}[section]
\newtheorem*{theorem*}{Theorem}
\newtheorem*{definition*}{Definition}
\newtheorem{proposition}[theorem]{Proposition}
\newtheorem{lemma}[theorem]{Lemma}
\newtheorem{corollary}[theorem]{Corollary}

\theoremstyle{definition}
\newtheorem{definition}[theorem]{Definition}
\newtheorem{question}[theorem]{Question}
\newtheorem{remark}[theorem]{Remark}
\newtheorem{warning}[theorem]{Warning}
\newtheorem{example}[theorem]{Example}
\newtheorem{notation}[theorem]{Notation}
\newtheorem{convention}[theorem]{Convention}
\newtheorem{assumption}[theorem]{Assumption}
\newtheorem{construction}[theorem]{Construction}
\newtheorem{claim}[theorem]{Claim}

\title{On the direct summand conjecture and its derived variant}
\author{Bhargav Bhatt}
\address{Department of Mathematics \\ University of Michigan, Ann Arbor}
\begin{abstract}
Andr\'e recently gave a beautiful proof of Hochster's direct summand conjecture in commutative algebra using perfectoid spaces; his two main results are a generalization of the almost purity theorem (the perfectoid Abhyankar lemma) and a construction of certain faithfully flat extensions of perfectoid algebras where ``discriminants'' acquire all $p$-power roots. 

In this paper, we explain a quicker proof of Hochster's conjecture that circumvents the perfectoid Abhyankar lemma; instead, we prove and use a quantitative form of Scholze's Hebbarkeitssatz (the Riemann extension theorem) for perfectoid spaces. The same idea also leads to a proof of a derived variant of the direct summand conjecture put forth by de Jong. 
\end{abstract}
\maketitle

\section{Introduction}

The first goal of this paper is to give a simpler proof of the following recent result of Andr\'e, settling the direct summand conjecture:

\begin{theorem}[Andr\'e]
\label{thm:DSCIntro}
Let $i:A_0 \hookrightarrow B_0$ be a finite extension of noetherian rings. Assume that $A_0$ is regular. Then the inclusion $i$ is split as an $A_0$-module map.
\end{theorem}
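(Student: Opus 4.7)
The plan is to reduce the direct summand conjecture to a monomial-type statement for an extension of a perfectoid ring, and then to invoke a quantitative version of Scholze's Hebbarkeitssatz to produce the needed near-splitting.

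First I would carry out the standard reductions. Since the equal characteristic cases are classical (Hochster in characteristic $p$, essentially trivial in characteristic zero), it suffices to treat the mixed characteristic case. After localizing, completing, and replacing $B_0$ by its normalization, Cohen's structure theorem lets me assume $A_0 = W(k)[[x_2,\ldots,x_d]]$ with $k$ a perfect field of characteristic $p$ and $x_1 := p, x_2,\ldots,x_d$ a regular system of parameters, with $B_0$ a module-finite normal $A_0$-algebra. A classical reduction of Hochster then shows that it suffices to verify the monomial statement: for every such pair $(A_0,B_0)$ and every $N \ge 1$, one has $(x_1\cdots x_d)^{N-1} \notin (x_1^N,\ldots,x_d^N)B_0$.

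Next I would pass to the perfectoid world. Choose a nonzero $g \in A_0$ (a power of $x_1\cdots x_d$ times a discriminant-like element for $B_0/A_0$ will do) so that $A_0[\tfrac{1}{pg}] \to B_0[\tfrac{1}{pg}]$ is finite étale; such $g$ exists because $B_0/A_0$ is generically étale in mixed characteristic. Let $A_\infty$ denote the $p$-adic completion of $A_0[x_1^{1/p^\infty},\ldots,x_d^{1/p^\infty},g^{1/p^\infty}]$. Then $A_\infty$ is a perfectoid $\mathbf{Z}_p$-algebra, the inclusion $A_0 \hookrightarrow A_\infty$ is cyclically pure, and the base change $A_\infty[\tfrac{1}{pg}] \to B_\infty[\tfrac{1}{pg}]$, where $B_\infty := B_0 \otimes_{A_0} A_\infty$, is finite étale. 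Thanks to cyclical purity it suffices to prove the analogous monomial statement for $B_\infty$ over $A_\infty$.

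The heart of the argument is a quantitative Hebbarkeitssatz on the perfectoid space $\mathrm{Spa}(A_\infty[\tfrac1p])$: sections of the finite étale cover defined away from $V(g)$ should extend across $V(g)$ with an almost-error controlled by explicit $p^n$-th roots of $g$ in $A_\infty$. Combined with the almost purity theorem, this would show that the integral closure $C$ of $A_\infty$ in $B_\infty[\tfrac1p]$ is almost finite étale over $A_\infty$ with respect to the ideal $(g^{1/p^\infty})$; in particular, for every $n$ one gets an $A_\infty$-linear map $\phi_n : B_\infty \to A_\infty$ with $\phi_n(1) \equiv 1 \bmod g^{1/p^n} A_\infty$. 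Applying these to a hypothetical monomial relation $(x_1\cdots x_d)^{N-1} = \sum_i c_i x_i^N$ in $B_\infty$ places $(x_1\cdots x_d)^{N-1}$ inside $(x_1^N,\ldots,x_d^N)A_\infty + g^{1/p^n}A_\infty$ for every $n$, and $p$-adic completeness combined with the regularity of $A_0$ then forces a contradiction.

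The principal obstacle is the quantitative Hebbarkeitssatz itself. Scholze's original statement gives extension with an almost-error controlled only by an unspecified ideal of almost-zero elements, whereas the entire argument above hinges on refining this into explicit control by $p^n$-th roots of the particular branch element $g$. Establishing these sharp bounds, and verifying their compatibility with the almost purity formalism in the mixed-characteristic setting at hand, is the main technical step.
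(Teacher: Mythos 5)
Your plan correctly identifies the high-level skeleton — reduce to mixed characteristic, pass to a perfectoid cover, use almost purity together with a quantitative Riemann extension theorem — but it has a genuine gap at the single most important step: the construction of $A_\infty$.

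You assert that $A_\infty := $ the $p$-adic completion of $A_0[x_1^{1/p^\infty},\ldots,x_d^{1/p^\infty},g^{1/p^\infty}]$ is a perfectoid $\mathbf{Z}_p$-algebra and that $A_0 \hookrightarrow A_\infty$ is cyclically pure. Neither claim is justified, and both are precisely the difficulty. The $p$-adic completion of $A_0[x_i^{1/p^\infty}]$ is integral perfectoid and faithfully flat over $A_0$ because the $x_i$ form a regular system of parameters, so freely adjoining their $p$-power roots is a free extension. But $g$ is an \emph{arbitrary} element of $A_0$ (typically a discriminant, having no favorable relation to the $x_i$); naively adjoining polynomial variables $g_k$ with $g_k^p = g_{k-1}$, $g_0 = g$ and completing $p$-adically gives a ring which is not known to be $p$-torsionfree, not known to have surjective Frobenius mod $p$, and not known to be flat or cyclically pure over $A_0$. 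Producing an integral perfectoid extension that is almost faithfully flat and contains a compatible system of $p$-power roots of $g$ is exactly the content of Andr\'e's flatness lemma (Theorem~\ref{thm:AdjoinRootsIntro}/\ref{thm:AdjoinRootsDisc}), whose proof goes through a careful approximation argument in the perfectoid space $Y = \mathrm{Spa}(A\langle T^{1/p^\infty}\rangle[\tfrac{1}{t}], A\langle T^{1/p^\infty}\rangle)$, taking bounded functions on rational neighborhoods of the Zariski closed locus $V(T-g)$. Your proposal treats this step as automatic and instead flags the quantitative Hebbarkeitssatz as ``the principal obstacle,'' which inverts the actual balance of difficulty: the quantitative Hebbarkeitssatz (Theorem~\ref{thm:QuantRET}) is a comparatively elementary, explicit module-theoretic computation with the presentations of $A\langle \tfrac{t^n}{g}\rangle/t^m$, whereas Andr\'e's flatness lemma is the essential new input that the paper singles out. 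You also phrase the extension step as extending sections of the finite \'etale cover across $V(g)$; what is actually used is an almost-pro-isomorphism $\{A_\infty/p^m\} \to \{A_\infty\langle \tfrac{p^n}{g}\rangle/p^m\}$ applied via $A_\infty$-linear functors (here $\mathrm{Ext}^1$) so that the extension class, which dies almost over each $A_\infty\langle\tfrac{p^n}{g}\rangle$ by almost purity, is already almost zero over $A_\infty$ after taking $R\lim$. The reduction to Hochster's monomial conjecture is a legitimate alternative to the paper's direct treatment of the $\mathrm{Ext}^1$ obstruction class, but this is a side issue compared to the missing construction of $A_\infty$.
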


When $A_0$ has characteristic $0$, Theorem~\ref{thm:DSCIntro} is easy to prove using the trace map. When $\dim(A_0) \leq 2$, one can prove Theorem~\ref{thm:DSCIntro} using the Auslander-Buchsbaum formula. Hochster conjectured the general case in 1969, and proved it when $A_0$ has characteristic $p$ in  \cite{HochsterDSC}. The first general result in mixed characteristic was Heitmann's \cite{HeitmannDSC3}, settling the case of dimension $3$. More on the history of this conjecture and its centrality amongst the `homological conjectures' in commutative algebra can be found in \cite{HochsterHomological}. The result above is proven by Andr\'e \cite{AndreDSC} using \cite{AndrePAL}. 

In this paper, we give a proof of Theorem~\ref{thm:DSCIntro} that avoids \cite{AndrePAL} (and is independent of \cite{AndreDSC} in terms of exposition). Our approach also adapts to yield the following derived variant, which was conjectured by Johan de Jong in the course of the author's thesis work \cite{BhattPosCharpdiv, BhattMixedCharpdiv} as a path towards understanding the direct summand conjecture:

\begin{theorem}
\label{thm:DDSCIntro}
Let $A_0$ be a regular noetherian ring. Let $f_0:X_0 \to \mathrm{Spec}(A_0)$ be a proper surjective map. Then the map $A_0 \to R\Gamma(X_0, \mathcal{O}_{X_0})$ splits in the derived category $D(A_0)$.
\end{theorem}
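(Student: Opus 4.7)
The plan is to adapt the strategy outlined for Theorem~\ref{thm:DSCIntro} to the derived setting. The first task is to reduce to the complete regular local mixed characteristic case. Since splitting in $D(A_0)$ is preserved and reflected along faithfully flat completions at maximal ideals, I would localize and complete to assume $A_0$ is a complete regular local ring. Cohen's structure theorem then presents $A_0$ as either $k[[x_1,\ldots,x_d]]$ or $V[[x_1,\ldots,x_d]]$ with $V$ a complete mixed characteristic DVR. The equal characteristic $0$ case is handled by resolving $X_0$ and using a trace construction; the equal characteristic $p$ case should follow from Frobenius manipulations, as in the author's thesis \cite{BhattPosCharpdiv}. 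So the mixed characteristic case is the substantive one.

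In that case I would build a perfectoid $A_0$-algebra $A_\infty$ by $p$-adically completing $A_0[p^{1/p^\infty}, x_1^{1/p^\infty},\ldots,x_d^{1/p^\infty}]$, after enlarging $V$ to carry compatible $p$-power roots of its uniformizer. Set $C := R\Gamma(X_0,\mathcal{O}_{X_0})$ and $C_\infty := A_\infty \otimes^L_{A_0} C$. The key geometric observation is that, away from the ``discriminant'' locus $V(g)$ for $g := p\, x_1 \cdots x_d$, the base-changed proper map $f_\infty$ can (after an alteration or Chow-style reduction of $X_0$) be brought into a form where it admits an obvious section on structure sheaves. The quantitative Hebbarkeitssatz promised in the abstract should then produce, for every $\epsilon$ of the form $1/p^n$, a map $\sigma_\epsilon : C_\infty \to A_\infty$ whose composition with the unit $A_\infty \to C_\infty$ is multiplication by $g^\epsilon$. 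It is this quantitative control on the lost power of $g$, absent from naive almost-mathematics, that is the crucial input.

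Next I would descend to $A_0$. Using that $A_0 \hookrightarrow A_\infty$ is itself an $A_0$-split inclusion of modules --- a statement which either follows from Theorem~\ref{thm:DSCIntro} applied to finite stages of the tower together with a limit argument, or can be proved directly by the same perfectoid technology --- one obtains maps $s_\epsilon : C \to A_0$ whose composition with the unit $A_0 \to C$ equals multiplication by $g^\epsilon$ for every $\epsilon = 1/p^n$. The obstruction to splitting $A_0 \to C$ outright lives in an $\mathrm{Ext}$-module that is finitely generated over the noetherian ring $A_0$ (using coherence of the derived pushforward along the proper map $f_0$); being annihilated by $g^{1/p^n}$ for all $n$ forces it to vanish, yielding the desired splitting.

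The main obstacle will be the middle step: proving the quantitative Hebbarkeitssatz in a form strong enough to apply here, and verifying that the generic splitting over $\mathrm{Spec}(A_\infty[1/g])$ actually exists for an arbitrary proper surjection $f_0$. In the original direct summand conjecture the generic splitting is immediate from finite \'etaleness of $B_0/A_0$ off the discriminant, but for a general proper surjection one presumably needs Chow's lemma combined with an alteration to put $X_0 \to \mathrm{Spec}(A_0)$ into a form amenable to perfectoid techniques. The surrounding reductions and the final descent should be essentially formal once this core input is in place.
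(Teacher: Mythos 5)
There are several genuine gaps, and the most serious one is in the descent step.

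\textbf{The descent from $A_\infty$ to $A_0$ is broken.} You claim to produce maps $s_\epsilon : C \to A_0$ whose composition with the unit equals ``multiplication by $g^\epsilon$'' for $\epsilon = 1/p^n$ --- but $g^{1/p^n} \notin A_0$, so there is no such endomorphism of $A_0$, and the $\mathrm{Ext}$-module over the noetherian ring $A_0$ cannot be ``annihilated by $g^{1/p^n}$ for all $n$'' because these elements do not act on it. Over the non-noetherian ring $A_\infty$, where the fractional powers do exist, the relevant $\mathrm{Ext}$-modules are no longer finitely generated, so the intended Krull-type conclusion is unavailable there either. The paper circumvents this entirely: it never descends a splitting, but instead runs a contradiction argument on the obstruction class, working modulo $p^m$ for a fixed $m$. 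If the obstruction $\alpha_0/p^m$ were nonzero, Krull's intersection theorem over the noetherian ring $A_0/p^m$ produces $k$ with $p^2 g \notin (\mathrm{Ann}(\alpha_0/p^m))^{p^k}$; this is transported to $A_\infty/p^m$ using (almost) faithful flatness and then contradicted by the almost vanishing of $\alpha_\infty/p^m$ with respect to $(pg)^{1/p^\infty}$.

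\textbf{The perfectoid cover and choice of $g$ are wrong.} You take $A_\infty$ to be the $p$-adic completion of $A_0[p^{1/p^\infty}, x_i^{1/p^\infty}]$ and set $g := p\,x_1\cdots x_d$. But the discriminant locus of an arbitrary proper surjective $f_0$ has nothing to do with the coordinates $x_i$, and there is no reason $f_0$ should be finite \'etale away from $V(px_1\cdots x_d)$. The paper first reduces $f_0$ to be \emph{generically finite} (by taking the closure of a generically defined multisection --- a genuine reduction you only gesture at with ``Chow's lemma plus an alteration''), then chooses $g$ so that $f_0$ is finite \'etale after inverting $pg$, and then crucially applies Andr\'e's Theorem~\ref{thm:AdjoinRootsDisc} to build a \emph{further} almost faithfully flat perfectoid extension $A_{\infty,0} \to A_\infty$ in which this particular $g$ acquires $p$-power roots. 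Your $A_\infty$ (which is the paper's $A_{\infty,0}$) does not contain roots of the actual discriminant, so the quantitative Hebbarkeitssatz cannot even be formulated with respect to the correct ideal.

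\textbf{The core almost-splitting over the perfectoid base is unaddressed.} The claim that, away from $V(g)$, the base-changed map ``admits an obvious section on structure sheaves'' is far from obvious. After passing to $A_\infty\langle p^n/g\rangle$, where $g \mid p^n$, one is in the situation of a proper morphism $Y \to \mathrm{Spec}(A)$ over an integral perfectoid ring $A$ that is finite \'etale after inverting $p$. Almost splitting $A \to R\Gamma(Y,\mathcal{O}_Y)$ in this case (Proposition~\ref{prop:DDSCAdmissibleAlteration}) requires: applying almost purity to $H^0(Y,\mathcal{O}_Y)$ to reduce to the case where $f[1/p]$ is an isomorphism, dominating the $p$-adic completion of $f$ by an admissible blowup, factoring the adic generic fiber $(S_\eta, \mathcal{O}^+_{S_\eta}) \to S$ through $Y$, and invoking Scholze's almost vanishing theorem to conclude $A \to R\Gamma(S_\eta, \mathcal{O}^+_{S_\eta})$ is an almost isomorphism. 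This is the technical heart of the derived statement and needs to be supplied.

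In short: the high-level outline (reduce to mixed characteristic, use a perfectoid cover, quantitative Hebbarkeitssatz, obstruction theory) matches the paper's, but the three steps above are either missing or, in the case of the descent, based on a maneuver that cannot work as stated.
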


When $A_0$ has characteristic $0$, this result is due to Kov\'acs \cite{KovacsRational}, and is deduced from the fact that $\mathrm{Spec}(A_0)$ has rational singularities; see also \cite[Theorem 2.12]{BhattPosCharpdiv}. The characteristic $p$ case follows from \cite[Theorem 1.4 \& Example 2.3]{BhattPosCharpdiv}. To the best of our knowledge, in mixed characteristic, Theorem~\ref{thm:DDSCIntro} is new even when $\dim(A_0) = 2$.

\begin{remark}
Again, Theorem~\ref{thm:DSCIntro} was proven by Andr\'e. Although it is explained in more detail and with more context in the body of the paper, the main contributions of this paper (as we see it) are:

\begin{enumerate}
\item To clearly explain why Andr\'e's relatively simple flatness lemma from \cite[\S 2.5]{AndreDSC} (reproduced in a slightly cleaned up form in Theorem~\ref{thm:AdjoinRootsDisc}) is the essential new ingredient in the solution of the direct summand conjecture. Indeed, using this lemma, we reprove the conjecture using only the quantitative Hebbarkeitssatz (which is a simple {\em linear} statement about a fixed and explicit system of modules over a perfectoid ring, see Theorem~\ref{thm:QuantRET} and its proof); in contrast, the approach in \cite{AndreDSC} relies on the perfectoid Abhyankar lemma from  \cite{AndrePAL} (which is a deep {\em non-linear} assertion describing an entire class of algebras over a perfectoid ring).

\item To use these techniques to establish derived version of the direct summand conjecture (i.e., Theorem~\ref{thm:DDSCIntro}). Note that Theorem~\ref{thm:DDSCIntro} has a range of geometric implications that are inaccessible from Theorem~\ref{thm:DSCIntro}. For example, it implies that passage to alterations of a regular affine scheme can  often be lossless for coherent cohomological purposes. In fact, even the special case of Theorem~\ref{thm:DDSCIntro} where $f_0$ is birational is a slightly nontrivial assertion about blowups (related to the work in \cite{CRHDIMixedChar}), and completely orthogonal to Theorem~\ref{thm:DSCIntro}.

\end{enumerate}

\end{remark}

\begin{assumption}
In the rest of the introduction, primarily for notational ease, we assume that $A_0 := \widehat{W[x_1,..,x_d]}$ is the $p$-adic completion of a polynomial ring over an unramified dvr $W$ of  mixed characterisitic $(0,p)$; there is a standard reduction of Theorem~\ref{thm:DSCIntro} to mild variants of such an $A_0$, so not much generality is lost. 
\end{assumption}

\subsection{The strategy of Andr\'e's proof}

Andr\'e's proof of Theorem~\ref{thm:DSCIntro} uses perfectoid spaces \cite{ScholzePerfectoidSpaces}. To see why this is natural, we first informally outline the main idea, adapted from \cite{BhattAlmostDSC}, in the special case where $A_0[\frac{1}{p}]  \to B_0[\frac{1}{p}]$ is \'etale. The crucial input is Faltings' almost purity theorem \cite{FaltingsAEE}\footnote{Faltings' theory of almost mathematics, in one incarnation, describes commutative algebra over a ring $V$ equipped with a nonzerodivisor  $f \in V$ that admits arbitrary $p$-power roots. More precisely, one works in the quotient of the abelian category of $V$-modules by the Serre category of all $(f^{\frac{1}{p^\infty}})$-torsion modules. The study of such `almost modules' was inspired by Tate's \cite{TatepDivGrp}, and led Faltings to prove fundamental results in $p$-adic Hodge theory \cite{FaltingspHT,FaltingsCrysCoh,FaltingsVeryRamified,FaltingsAEE}. Following Faltings' work, a systematic investigation of almost mathematics was carried by Gabber and Ramero \cite{GabberRamero,GabberRameroFART}. The relevance of almost mathematics to the direct summand conjecture seems to be first suggested by Roberts \cite{RobertsRootClosure}, \cite[\S 0]{GabberRamero}.}, which asserts: if $A_{\infty,0}$ is the $p$-adic completion of $A_0[x_i^{\frac{1}{p^\infty}}, p^{\frac{1}{p^\infty}}]$, then the $p$-adic completion of the integral closure $B_\infty$ of $B_0$ in $B_0 \otimes_{A_0} A_{\infty,0}[\frac{1}{p}]$ is {\em almost} finite \'etale over $A_{\infty,0}$ with respect to the ideal $(p^{\frac{1}{p^\infty}}) \subset A_{\infty,0}$. Concretely, the algebraic obstructions to $A_{\infty,0} \to B_\infty$ being finite \'etale --- such as the cokernel of the trace map $B_\infty \to A_{\infty,0}$ or the $\mathrm{Ext}^1$-class measuring the failure of $A_{\infty,0} \to B_\infty$ to split --- are killed by $p^{\frac{1}{p^k}}$ for all $k \geq 0$, and are thus quite `small'. The faithful flatness of $A_0 \to A_{\infty,0}$ and the noetherianness of $A_0$ then let one conclude that $A_0 \to B_0$ must actually split. Summarizing, the key ideas are: 
\begin{enumerate}
\item The construction of the faithfully flat extension $A_0 \to A_{\infty,0}$.
\item The almost splitting after base change to $A_{\infty,0}$ coming from the almost purity theorem.
\end{enumerate}
It is now easy to see why perfectoid spaces  provide a natural conceptual home for this proof: the ring $A_{\infty,0}$ in (1) is (integral) perfectoid\footnote{A precise definition is given in \S \ref{notation}. The crucial consequence of a ring $R$ being perfectoid is that the Frobenius $R/p \to R/p$ is surjective, and has a large but controlled kernel. Under suitable completeness and torsionfreeness hypotheses, this leads to the existence of lots of  elements that admit arbitrary $p$-power roots, such as the elements $p$ and $x_i$ in $R = A_{\infty,0}$.}, and the almost purity theorem invoked in (2) is a general fact valid for finite extensions of any perfectoid algebra that are \'etale after inverting $p$ (due to Kedlaya-Liu \cite{KedlayaLiu} and Scholze \cite{ScholzePerfectoidSpaces}). 

Andr\'e's proof of Theorem~\ref{thm:DSCIntro} follows a similar outline to the one sketched above. The first major difference is that $A_{\infty,0}$ is replaced by a larger perfectoid extension $A_\infty$ of $A_{\infty,0}$ coming from the following remarkable construction:

\begin{theorem}[Andr\'e]
\label{thm:AdjoinRootsIntro}
Fix $g \in A_0$. Then there exists a map $A_{\infty,0} \to A_\infty$ of integral perfectoid algebras that is almost faithfully flat modulo $p$ such that the element $g \in A_0$ admits a compatible system of $p$-power roots $g^{\frac{1}{p^k}}$ in $A_\infty$.
\end{theorem}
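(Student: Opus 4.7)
The plan is to construct $A_\infty$ by formally adjoining a compatible system of $p$-power roots of $g$ in a perfectoid manner. Explicitly, I would set
\[
A_\infty := \bigl(A_{\infty,0}\langle T^{1/p^\infty}\rangle \big/ (T-g)\bigr)^{\wedge}_{p,\,\mathrm{tf}},
\]
i.e., the $p$-torsion-free quotient of the $p$-adic completion of $A_{\infty,0}\langle T^{1/p^\infty}\rangle / (T-g)$, where $A_{\infty,0}\langle T^{1/p^\infty}\rangle$ is the $p$-adic completion of $A_{\infty,0}[T^{1/p^\infty}]$. The enveloping ring $A_{\infty,0}\langle T^{1/p^\infty}\rangle$ is integral perfectoid since $A_{\infty,0}$ is and we have formally adjoined $p$-power roots of the additional variable $T$. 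The images of $T^{1/p^n}$ in $A_\infty$ form the required compatible system $g^{1/p^n}$ tautologically.

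To verify that $A_\infty$ is integral perfectoid, I would check surjectivity of the Frobenius on $A_\infty/p$. Ignoring for the moment the torsion quotient, we have
\[
A_\infty/p \;\cong\; (A_{\infty,0}/p)[T^{1/p^\infty}] \big/ (T - \bar g),
\]
on which Frobenius acts by $T^{1/p^{n+1}} \mapsto T^{1/p^n}$; this combines with surjectivity of Frobenius on $A_{\infty,0}/p$ to give surjectivity on $A_\infty/p$.

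The heart of the matter is almost faithful flatness of $A_{\infty,0}/p \to A_\infty/p$. The key observation is that each finite-level truncation
\[
(A_{\infty,0}/p)[T^{1/p^n}]\big/(T - \bar g) \;\cong\; (A_{\infty,0}/p)[U]\big/(U^{p^n} - \bar g)
\]
is a \emph{free} $(A_{\infty,0}/p)$-module of rank $p^n$, with basis $1, U, U^2, \dots, U^{p^n-1}$, because $U^{p^n}-\bar g$ is a monic polynomial of degree $p^n$. Since filtered colimits of flat modules are flat, the uncompleted quotient $(A_{\infty,0}/p)[T^{1/p^\infty}]/(T-\bar g)$ is flat over $A_{\infty,0}/p$. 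Moreover, the ring map $T^{1/p^n} \mapsto \bar g^{1/p^n}$ provides a retraction of the structure map, so flatness is upgraded to faithful flatness at this uncompleted level.

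The main obstacle I anticipate is controlling the loss incurred by $p$-adic completion followed by the quotient by $p$-power torsion, both of which can damage exact flatness. I would expect these operations only to introduce discrepancies annihilated by every element of $(p^{1/p^\infty})$, which is precisely why the final conclusion is \emph{almost} faithful flatness rather than honest faithful flatness. Quantifying this --- essentially, bounding the $p$-power torsion of $A_{\infty,0}\langle T^{1/p^\infty}\rangle/(T-g)$ by powers of $p^{1/p^\infty}$ via an explicit calculation in the free truncations above --- is the technical heart of the flatness lemma that the paper credits to Andr\'e.
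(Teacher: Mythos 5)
There is a genuine gap: the ring you construct is not integral perfectoid, and the failure is not an almost-zero defect that completion or torsion-quotienting can repair.

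Concretely, choose any $p$-th root $h \in A_{\infty,0}/p$ of $\bar g$ (one exists since Frobenius is surjective on $A_{\infty,0}/p$). In your candidate ring modulo $p$, namely $(A_{\infty,0}/p)[T^{1/p^\infty}]/(T - \bar g)$, the element $y := T^{1/p} - h$ satisfies $y^p = T - \bar g = 0$, yet $y$ is a \emph{nonzero} element and $y \notin (p^{1/p})$: by your own free-truncation observation, $(A_{\infty,0}/p)[T^{1/p^n}]/(T - \bar g)$ is free over $A_{\infty,0}/p$ with basis $1, T^{1/p^n}, \dots, T^{(p^n-1)/p^n}$, and in this basis $y$ has unit coefficient on $T^{1/p}$, so it cannot be a multiple of $p^{1/p}$. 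Hence Frobenius $A_\infty/p^{1/p} \to A_\infty/p$ has a kernel that is not almost zero, and the perfectoid axiom fails outright. Passing to the $p$-adic completion or to the $p^\infty$-torsion-free quotient does not remove $y$ (it is neither $p$-adically small nor $p$-power torsion in general), so the construction does not land in perfectoid rings. Intuitively, the naive quotient forces $T = g$ but does \emph{not} force $T^{1/p^k}$ to be compatible with any roots of $g$ lurking in the ring, and the perfectoid condition precisely demands such compatibility.

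The paper's construction, following Scholze's Zariski-closed subspaces from \cite[\S II.2]{ScholzeTorsion}, avoids this: one sets $Y := \mathrm{Spa}(A_{\infty,0}\langle T^{1/p^\infty}\rangle[\frac1t], A_{\infty,0}\langle T^{1/p^\infty}\rangle)$ and defines $A_\infty$ to be the $t$-completed colimit over $\ell$ of $B_\ell := \mathcal{O}_Y^+\bigl(Y\langle\frac{T-g}{t^\ell}\rangle\bigr)$, the bounded functions on shrinking rational open neighborhoods of $V(T-g)$. In this colimit $T - g$ is infinitely $t$-divisible, and --- crucially --- so are suitable perfectoid approximations of its $p$-power roots supplied by the approximation lemma in the tilt. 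That is exactly what collapses the troublesome kernel of Frobenius and yields a genuinely perfectoid ring. The flatness proof for this $A_\infty$ is then \emph{not} the monic-polynomial free-basis argument you give (that establishes flatness of the wrong ring); instead, one writes down explicit presentations of the $B_\ell$ via the approximation lemma and runs a Frobenius-untwisting argument identifying $B_\ell$ modulo $t^{1/p^{k+1}}$ with a free $A/t^{1/p}$-algebra. Your observation about free truncations is a shadow of this last step, but without the passage through rational opens and the tilt approximation, the construction itself does not get off the ground.
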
 

Andr\'e's proof of Theorem~\ref{thm:AdjoinRootsIntro} relies crucially on perfectoid geometry, and is explained in \S \ref{sec:AdjoinRootsDisc}. For the application to Theorem~\ref{thm:DSCIntro}, one chooses $g \in A_0$ to be a discriminant, i.e., an element $g$ such that $A_0[\frac{1}{g}] \to B_0[\frac{1}{g}]$ is finite \'etale. The flatness assertions in Theorem~\ref{thm:AdjoinRootsIntro} then reduce us almost splitting the base change $A_\infty \to B_0 \otimes_{A_0} A_\infty$. 

To construct an almost splitting over $A_\infty$, Andr\'e proves a much stronger result, which forms the subject of \cite{AndrePAL}: he generalizes the almost purity theorem to describe extensions of $A_\infty$ that are \'etale after inverting $g$ (almost purity corresponds to $g=p$). The output is {\em roughly} that the integral closure $B_\infty$ of $B_0 \otimes_{A_0} A_\infty$ in $B_0 \otimes_{A_0} A_\infty[\frac{1}{g}]$ is almost finite \'etale over $A_\infty$, where `almost mathematics' is measured with respect to $((pg)^{\frac{1}{p^\infty}}) \subset A_\infty$; the precise statement is more subtle, and we do not formulate it  here as we do not need it.

\subsection{The strategy of our proof}

Our proof uses Theorem~\ref{thm:AdjoinRootsIntro}. Thus, the task is to (almost) split $A_0 \to B_0$ after base change to the ring $A_\infty$ arising from Theorem~\ref{thm:AdjoinRootsIntro}. For this, we again use perfectoid geometry. More precisely, for each $n \geq 1$, the general theory gives us the perfectoid ring $A_\infty \langle \frac{p^n}{g} \rangle$ of bounded functions on the rational subset 
\[U_n := \{x \in X \mid |p^n| \leq |g(x)|\} \]
of the perfectoid space $X$ associated to $A_\infty$. These rings naturally form a projective system as $n$ varies (since $U_n \subset U_{n+1}$), and can be almost described very explicitly: to get $A_\infty \langle \frac{p^n}{g} \rangle$, one formally adjoins $\frac{p^n}{g}$ and its $p$-power roots to $A$.  Their main utility to us\footnote{The idea of using this tower of rings to study $A_\infty$ also comes from \cite{AndrePAL}.} is that $g$ divides $p^n$ in $A_\infty \langle \frac{p^n}{g} \rangle$,  so $A_0 \to B_0$ becomes finite \'etale after base change to $A_\infty \langle \frac{p^n}{g} \rangle[\frac{1}{p}]$ for any $n \geq 0$; the almost purity theorem then kicks in to show that for each $n \geq 0$, the base change of $A_0 \to B_0$ to the perfectoid algebra $A_\infty \langle \frac{p^n}{g} \rangle$ is almost split. To descend the splitting to $A_\infty$, we prove the following quantitative form of Scholze's Riemann extension theorem \cite[Proposition II.3.2]{ScholzeTorsion}. 

\begin{theorem}
\label{thm:RiemannExtensionFinitisticIntro}
Fix an integer $m \geq 0$. The natural map of pro-systems
\begin{equation}
\label{eq:RETIntro}
\{A_\infty/p^m\}_{n \geq 1} \to \{ A_\infty \langle \frac{p^n}{g} \rangle /p^m\}_{n \geq 1} 
\end{equation}
is an almost-pro-isomorphism with respect to $(pg)^{\frac{1}{p^\infty}}$, i.e., for each $k \geq 0$, the pro-system of kernels and cokernels is pro-isomorphic to a pro-system of $(pg)^{\frac{1}{p^k}}$-torsion modules. 
\end{theorem}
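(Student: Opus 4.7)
The plan is to work with an almost-explicit model of $A_\infty\langle \frac{p^n}{g}\rangle$ modulo $p^m$ and read off the claim from a direct calculation.

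First, I would invoke the standard perfectoid description of rational localizations to produce a $(pg)^{1/p^\infty}$-almost isomorphism
\[
A_\infty\langle \tfrac{p^n}{g} \rangle \;\cong\; \bigl( A_\infty[T^{1/p^\infty}] \big/ (g^{1/p^k} T^{1/p^k} - p^{n/p^k} : k \geq 0) \bigr)^{\wedge}_p,
\]
in which $T$ stands for $p^n/g$ and acquires compatible $p$-power roots because the target is perfectoid. Reducing modulo $p^m$, this gives an almost description of $M_n := A_\infty\langle \frac{p^n}{g}\rangle/p^m$ as a quotient of $(A_\infty/p^m)[T^{1/p^\infty}]$ by the relations $g^s T^s = p^{ns}$ for all $s \in \mathbb{Z}[\frac{1}{p}]_{\geq 0}$. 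The transition map $M_{n'} \to M_n$ for $n' > n$ acts by $T_{n'}^s \mapsto p^{(n'-n)s}\,T_n^s$, since $p^{n'}/g = p^{n'-n} \cdot (p^n/g)$.

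Next, I would focus on the cokernel $C_n := \mathrm{coker}(A_\infty/p^m \to M_n)$, which is almost generated as an $A_\infty/p^m$-module by the classes $\bar T_n^s$ for $s \in \mathbb{Z}[\frac{1}{p}]_{>0}$. The key annihilation
\[
g^{s}\,\bar T_n^{s} \;=\; 0 \quad \text{in } C_n, \text{ for every } s > 0,
\]
is immediate from $g^s T_n^s = p^{ns}$ together with the fact that $p^{ns}\in A_\infty/p^m$ dies in the cokernel. Now fix $k \geq 0$ and set $n' := n + mp^k$. For an arbitrary generator $\bar T_{n'}^s$, exactly one of two cases occurs: if $s \leq 1/p^k$ then $g^{1/p^k} = g^s \cdot g^{1/p^k - s}$ with $g^{1/p^k - s} \in A_\infty$, so the annihilation propagates to give $g^{1/p^k} \cdot \bar T_n^s = 0$ in $C_n$, and in particular the image $p^{(n'-n)s}\bar T_n^s$ is killed by $g^{1/p^k}$; if instead $s > 1/p^k$, then $(n'-n)s > mp^k/p^k = m$, so $p^{(n'-n)s}$ already vanishes in $A_\infty/p^m$ and the image vanishes outright. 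In either case $g^{1/p^k}$, and hence $(pg)^{1/p^k}$, annihilates the image of $\bar T_{n'}^s$; summing over generators, $(pg)^{1/p^k}$ annihilates the whole image of the transition $C_{n'} \to C_n$. Since $k$ was arbitrary, this is the asserted almost-pro-isomorphism statement for cokernels.

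For the kernel, I would appeal to almost faithful flatness of the perfectoid rational localization $A_\infty \to A_\infty\langle p^n/g\rangle$: it ensures that reduction modulo $p^m$ remains almost injective, so each $K_n := \ker(A_\infty/p^m \to M_n)$ is almost zero and the corresponding pro-system is trivially almost-pro-zero. I expect the main obstacle to be the input in the first paragraph, namely establishing the explicit perfectoid presentation of $A_\infty\langle p^n/g\rangle$ up to $(pg)^{1/p^\infty}$-almost isomorphism (together with the accompanying flatness) from the foundations of perfectoid rings. Once these standard facts are in hand, the cokernel calculation with its clean dichotomy --- small $s$ killed by $g^{1/p^k}$, large $s$ killed by $p^{(n'-n)s}$ --- is essentially mechanical.
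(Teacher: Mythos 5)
Your cokernel argument is essentially identical to the paper's (Theorem~\ref{thm:QuantRET}): same explicit presentation of $A_\infty\langle p^n/g\rangle$ modulo $p^m$ (via \cite[Lemma 6.4]{ScholzePerfectoidSpaces}), same formula $T_{n'}^s \mapsto p^{(n'-n)s}T_n^s$ for the transition maps, same choice of stride $c = mp^k$, and the same clean dichotomy on the exponent $s$ versus $1/p^k$. One small calibration note: the presentation from perfectoid theory is an almost isomorphism with respect to $p^{1/p^\infty}$, not $(pg)^{1/p^\infty}$; the latter ideal only enters through the cokernel estimate you carry out afterwards.

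There is, however, a genuine gap in the kernel step. You assert that the rational localization $A_\infty \to A_\infty\langle p^n/g\rangle$ is almost \emph{faithfully} flat and use that to conclude almost injectivity mod $p^m$. Rational localizations of perfectoid rings are almost flat, but they are not (almost) faithfully flat: $\mathrm{Spa}$ of the localization is a \emph{proper} rational subset of $X$, so the map cannot detect modules supported away from that subset, and faithful flatness fails. Flatness alone does not give injectivity of $A_\infty/p^m \to A_\infty\langle p^n/g\rangle/p^m$ (compare $\mathbf{Z}\hookrightarrow\mathbf{Q}$, which is flat, with $\mathbf{Z}/p\to\mathbf{Q}/p=0$). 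The paper instead imposes in Notation~\ref{not:QuantRET} that $g$ is almost a nonzerodivisor modulo $t^m$, from which injectivity of $f_n:A/t^m\to M_n$ is read off directly from the presentation $M_n = (A/t^m)[u_n^{1/p^\infty}]/(\,(u_ng)^{1/p^k}-t^{n/p^k}\,)$; this hypothesis is then removed \emph{a posteriori} in Remark~\ref{rmk:RETnzd} by base change from the universal case $R = K^\circ\langle T^{1/p^\infty}\rangle$, $g=T$ (where $T$ is manifestly a nonzerodivisor). Replacing your faithful-flatness claim with one of these two routes would make the proof complete.
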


\begin{remark}
\label{rmk:ScholzeRH}
On taking limits over $n$ and $m$ in Theorem~\ref{thm:RiemannExtensionFinitisticIntro}, one obtains an almost isomorphism $A_\infty \stackrel{a}{\simeq} \lim A_\infty \langle \frac{p^n}{g} \rangle$, i.e., the following statement from \cite[Proposition II.3.2]{ScholzeTorsion}: any bounded function on the Zariski open set $\{x \in X | \ g(x) \neq 0 \} = \cup_n U_n \subset X$ almost extends to $X$. In other words, this gives a perfectoid analog of the  Riemann extension theorem in complex geometry. A similar result in rigid geometry was proven by Bartenwerfer \cite{Bartenwerfer}. 
\end{remark}

\begin{remark}
Theorem~\ref{thm:RiemannExtensionFinitisticIntro} roughly says that the limiting isomorphism $A_\infty \stackrel{a}{\simeq} \lim A_\infty \langle \frac{p^n}{g} \rangle$ from Remark~\ref{rmk:ScholzeRH} holds true for  `diagrammatic' reasons. Consequently, it remains true after applying $A_\infty$-linear functors, such as $\mathrm{Ext}^i_{A_\infty}(N,-)$ for any $A_\infty$-module $N$, to both sides of \eqref{eq:RETIntro} and then taking limits. The case $i=0$ recovers Scholze's theorem, the case $i=1$ is essential to Theorem~\ref{thm:DSCIntro}, and Theorem~\ref{thm:DDSCIntro} relies on the statement for all $i \geq 0$.
\end{remark}

Using Theorem~\ref{thm:RiemannExtensionFinitisticIntro}, the proof of Theorem~\ref{thm:DSCIntro} proceeds along the lines sketched above, and can thus be summarized as follows: pass from $A_0$ to $A_\infty$ using Theorem~\ref{thm:AdjoinRootsIntro} to ensure this passage is lossless, pass from $A_\infty$ to $A_\infty \langle \frac{p^n}{g} \rangle$ to push all the ramificiation into characteristic $p$, construct an almost splitting over $A_\infty \langle \frac{p^n}{g} \rangle$ using almost purity, and finally take a limit over $n$ to get an almost splitting over $A_\infty$ thanks to Theorem~\ref{thm:RiemannExtensionFinitisticIntro}. In particular, it is exactly the last step (relying on the relatively simple module-theoretic statement in Theorem~\ref{thm:RiemannExtensionFinitisticIntro}) where our approach to Theorem~\ref{thm:DSCIntro} diverges from that of \cite{AndreDSC} (which relies on the  sophisticated perfectoid Abhyankar lemma \cite{AndrePAL}).

To prove Theorem~\ref{thm:DDSCIntro}, we  proceed analogously. First, assume that $f_0$ ramifies only in characteristic $p$, i.e., $f_0[\frac{1}{p}]$ is finite \'etale. Again, it suffices to construct the splitting after going up to a faithfully flat integral perfectoid extension of $A_0$ (such as the ring $A_{\infty,0}$ above). After such a base change, the almost purity theorem and a general vanishing theorem of Scholze settle the question. In general, one first reduces to $f_0$ being generically finite, and finite \'etale after inverting some $g \in A_0$. This case is then deduced from preceding special case using Theorem~\ref{thm:AdjoinRootsIntro} and Theorem~\ref{thm:RiemannExtensionFinitisticIntro}, exactly as was explained above for Theorem~\ref{thm:DSCIntro}.

\subsection{Layout} We begin in \S \ref{sec:AdjoinRootsDisc} by recalling Andr\'e's proof of Theorem~\ref{thm:AdjoinRootsIntro} (in a slightly more general setup). Theorem~\ref{thm:RiemannExtensionFinitisticIntro} is proven in \S \ref{sec:QuantRET}; this depends on the notion of almost mathematics of pro-systems, which is briefly developed in \S \ref{sec:ProAlmostZero}. With these ingredients in place, Theorems~\ref{thm:DSCIntro} and \ref{thm:DDSCIntro} are proven in \S \ref{sec:DSC} and \S \ref{sec:DDSC} respectively.

\subsection{Notation}
\label{notation}
 We freely use the language of perfectoid spaces and almost mathematics. Occasionally, we use almost mathematics with respect to different ideals in the same ring; thus we always specify the relevant ideal, sometimes at the beginning of each section. The letter $K$ denotes a perfectoid field\footnote{At first pass, not much is lost if one simply sets $K := \widehat{\mathbf{Q}_p(p^{\frac{1}{p^\infty}})}$ in characteristic $0$ (with $t=p$), and $K := \widehat{\mathbf{F}_p ((t^{\frac{1}{p^\infty}}))}$ in characteristic $p$.}, and $K^\circ \subset K$ is the ring of integers. We fix an element $t \in K^\circ$ which admits arbitrary $p$-power roots $t^{\frac{1}{p^k}}$, and such that $|t| = |p|$ if $K$ has characteristic $0$. A $K^\circ$-algebra $A$ is called {\em integral perfectoid} if it is flat, $t$-adically complete, satisfies\footnote{Concretely, the assumption $A = A_*$ means that if $f \in A[\frac{1}{t}]$ is such that $t^{\frac{1}{p^k}} \cdot f \in A$ for all $k \geq 0$, then $f \in A$. This condition is not really serious and can often be ignored: if $A$ is a $t$-adically complete and flat $K^\circ$-algebra with Frobenius inducing an almost isomorphism $A/t^{\frac{1}{p}} \simeq A/t$, then $A' := A_* :=  \mathrm{Hom}_{K^\circ}(t^{\frac{1}{p^\infty}}, A)$ is an integral perfectoid $K^\circ$-algebra in the sense introduced above by \cite[Lemma 5.6]{ScholzePerfectoidSpaces}, and $A \to A_*$ is an almost isomorphism.} $A = A_*$, and satisfies the following: Frobenius induces an isomorphism $A/t^{\frac{1}{p}} \simeq A/t$. The category of such algebras is equivalent to usual category of perfectoid $K$-algebras by \cite[Theorem 5.2]{ScholzePerfectoidSpaces}; the functors are $A \mapsto A[\frac{1}{t}]$ and $R \mapsto R^\circ$ respectively. 

\subsection*{Acknowledgements} This paper is obviously inspired by Andr\'e's preprints \cite{AndrePAL,AndreDSC}; I thank heartily him for sharing them, and for his kind words about this paper. I am also very grateful to Peter Scholze for patiently explaining basic facts about perfectoid spaces, for discussions surrounding Theorem~\ref{thm:AdjoinRootsIntro}, and for convincing me that `almost-pro-isomorphism' is a better name than `pro-almost-isomorphism' in \S \ref{sec:ProAlmostZero}. I am equally indebted to my former PhD advisor Johan de Jong for talking through the contents of this manuscript, and his very prescient suggestion (slightly over seven years ago!) that I pursue the mathematics surrounding Theorem~\ref{thm:DDSCIntro}.  Finally, I thank Brian Conrad, Ray Heitmann, Mel Hochster, Kiran Kedlaya, Linquan Ma, Peter Scholze, Karl Schwede and an anonymous referee for many useful comments on the first version of this paper. I was partially supported by NSF Grant DMS \#1501461 and a Packard fellowship during the preparation of this work.

\section{Adjoining roots of the discriminant}
\label{sec:AdjoinRootsDisc}

\begin{notation}
Let $A$ be an integral perfectoid $K^\circ$ algebra. Fix $g \in A$. Set $X := \mathrm{Spa}(A[\frac{1}{t}],A)$ and $Y := \mathrm{Spa}(A \langle T^{\frac{1}{p^\infty}} \rangle[\frac{1}{t}], A \langle T^{\frac{1}{p^\infty}} \rangle)$; these are perfectoid spaces. All occurrences of almost mathematics in this section are with respect to $t^{\frac{1}{p^\infty}}$.
\end{notation}

The main goal of this section is to construct an almost faithfully flat extension $A \to A_\infty$ of perfectoid algebras such that $g$ acquires arbitrary $p$-power roots in $A_\infty$. For this, we essentially set $T=g$ in $A\langle T^{\frac{1}{p^\infty}} \rangle$. More precisely,  to get a perfectoid algebra, we approximate bounded functions on the Zariski closed space 
\[ Z := V(T-g) := \{y \in Y | T(y) = g(y)\} \subset Y\]
using bounded functions on rational open neighbourhoods 
\[ Y\langle \frac{T-g}{t^\ell} \rangle := \{y \in Y \mid |T(y) - g(y)| \leq |t^\ell|\} \subset Y\] 
of $Z$ for varying integers $\ell$. 

\begin{definition}
Set $A_\infty$ to be the integral perfectoid\footnote{The ring $A_\infty$ defined here might not be integral perfectoid, but is almost isomorphic to one (by passing to $(A_\infty)_{\ast}$), so we ignore the distinction.} ring of functions on the Zariski closed subset of $Y$ defined by the ideal $(T - g)$, in the sense of \cite[\S II.2]{ScholzeTorsion}. Explicitly, we have
\[ A_\infty = \widehat{\colim_{\ell \in \mathbf{N}}} \ B_\ell \quad \text{where} \quad B_\ell := \mathcal{O}_Y^+(Y\langle \frac{T-g}{t^\ell} \rangle), \]  
and the completion appearing on the left is $t$-adic. 
 \end{definition}

 Note that $T = g$ in $A_\infty$ as $(T-g)$ is divisible by $t^\ell$ in $B_\ell$, and thus in $A_\infty$, for all $\ell$. Thus, $g$ has a distinguished system of $p$-power roots $g^{\frac{1}{p^k}} := T^{\frac{1}{p^k}}$ in $A_\infty$. The main theorem is (see \cite[\S 2.5]{AndreDSC}):

\begin{theorem}[Andr\'e]
\label{thm:AdjoinRootsDisc}
For each $\ell > 0$, the map $A \to B_\ell$ is almost faithfully flat modulo $t$. Consequently, the map $A \to A_\infty$ is almost faithfully flat modulo $t$. 
\end{theorem}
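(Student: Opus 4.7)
The plan is to tilt to characteristic $p$, where the perfectness of $A^\flat$ enables a trivializing change of variables. Fix $\ell \geq 1$. By Scholze's approximation lemma for perfectoid algebras, choose $g^\flat \in A^{\flat,\circ}$ with $(g^\flat)^\sharp \equiv g \pmod{t^{\ell+1}}$. Since $g$ and $(g^\flat)^\sharp$ differ by an element of norm at most $|t^{\ell+1}|$, the rational subsets $Y\langle \frac{T-g}{t^\ell}\rangle$ and $Y\langle \frac{T-(g^\flat)^\sharp}{t^\ell}\rangle$ of $Y$ coincide. Under the tilting equivalence for rational subsets of perfectoid spaces, the latter corresponds to $U^\flat := Y^\flat\langle \frac{T-g^\flat}{t^{\flat,\ell}}\rangle \subset Y^\flat$, and the mod-$t$ tilting equivalence gives a canonical almost isomorphism $B_\ell/t \cong \mathcal{O}^+_{Y^\flat}(U^\flat)/t^\flat$ as algebras over $A/t \cong A^\flat/t^\flat$.

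Next, compute the functions on $U^\flat$ in characteristic $p$. Since $A^\flat$ is perfect, $g^\flat$ has compatible $p$-power roots $g^{\flat, 1/p^k}$, and the substitution $T^{1/p^k} \mapsto T^{1/p^k} + g^{\flat, 1/p^k}$ extends to an $A^\flat$-algebra automorphism of $A^\flat\langle T^{1/p^\infty}\rangle$: the relations $(T^{1/p^{k+1}})^p = T^{1/p^k}$ are preserved since $(a+b)^p = a^p + b^p$ in characteristic $p$, and the substitution is an isometry for the Gauss norm, hence continuous for the $t^\flat$-adic topology. This automorphism carries $T - g^\flat$ to $T$, identifying $U^\flat$ with the perfectoid polydisc $\{|T| \leq |t^{\flat,\ell}|\}$, whose integral functions form the perfectoid Tate algebra $A^\flat\langle S^{1/p^\infty}\rangle$ with $S := T/t^{\flat,\ell}$. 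Modulo $t^\flat$ this is the (non-completed) polynomial ring $(A^\flat/t^\flat)[S^{1/p^\infty}]$, a free $A^\flat/t^\flat$-module of infinite rank and thus genuinely faithfully flat. Untilting then shows that $A/t \to B_\ell/t$ is almost faithfully flat.

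For the $A_\infty$ statement: modulo $t$, the $t$-adic completion is trivial, so $A_\infty/t = \colim_\ell B_\ell/t$. Filtered colimits preserve almost flatness, giving almost flatness of $A/t \to A_\infty/t$. For faithfulness, any non-almost-zero element $x$ of an $A/t$-module $M$ has non-almost-zero image $x \otimes 1 \in M \otimes_{A/t} B_\ell/t$ for each $\ell$ by the first assertion; since the transition maps $B_\ell/t \to B_{\ell+1}/t$ are unital ring homomorphisms sending $1 \mapsto 1$, the elements $x \otimes 1$ are compatible across the system and remain non-almost-zero at every stage, and hence in $M \otimes_{A/t} A_\infty/t = \colim(M \otimes_{A/t} B_\ell/t)$. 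The main technical obstacle is the tilting step: one must work around the fact that $g$ need not possess a literal tilt in $A^\flat$, which is circumvented by the approximation lemma, allowing us to replace $g$ by $(g^\flat)^\sharp$ without altering the rational subset. Once in characteristic $p$, the affine change of variables $T \mapsto T + g^\flat$ (available because $A^\flat$ is perfect and the Freshman's dream trivializes taking $p$-power roots) collapses the construction to a perfectoid polydisc, where the conclusion is manifest.
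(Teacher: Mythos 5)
Your overall strategy --- tilt to characteristic $p$, eliminate $g$ by the translation automorphism $T \mapsto T + g^\flat$ (which works there because $p$-power roots are trivial to translate), and land on a perfectoid polydisc whose reduction mod $t^\flat$ is a free module --- is genuinely different from the paper's argument, which never leaves the untilted side and instead reduces modulo a tiny power $t^{1/p^{k+1}}$ before applying a $k$-fold Frobenius twist. Unfortunately, your approach hits a real problem at the very first step: the approximation lemma does \emph{not} produce $g^\flat \in A^{\flat\circ}$ with $(g^\flat)^\sharp \equiv g \pmod{t^{\ell+1}}$ for $\ell \geq 1$. Scholze's Corollary 6.7 (or the Kedlaya--Liu analogue) only gives, for each $c,\epsilon$, an element $g^\flat$ with $|g(x) - (g^\flat)^\sharp(x)| \leq |p|^{1-\epsilon}\max(|g(x)|,|p|^c)$ for all $x$. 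Since $g$ is a general element of $A$ and $|g(x)|$ can be close to $1$, the error can be as large as roughly $|t|^{1-\epsilon}$, which is \emph{larger} than $|t^\ell|$ whenever $\ell \geq 1$. (The map $\sharp: A^{\flat\circ} \to A^\circ$ is only multiplicative, not additive, so you cannot approximate arbitrary elements arbitrarily well by sharps; the mod-$t$ identification $A^\flat/t^\flat \cong A/t$ is the best uniform statement available.) Consequently, your claimed coincidence of rational subsets $Y\langle \frac{T-g}{t^\ell}\rangle = Y\langle \frac{T-(g^\flat)^\sharp}{t^\ell}\rangle$ fails, and the whole tilt-then-translate reduction collapses.

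The paper sidesteps exactly this issue by applying the approximation lemma to $T-g$ itself, i.e.\ to the function whose vanishing locus defines the rational subset: Scholze's corollary then delivers not merely an approximation $f^\sharp \equiv T-g \pmod{t^{1/p}}$ but also, crucially, the built-in valuation control ensuring $Y\langle \frac{T-g}{t^\ell}\rangle = Y\langle \frac{f^\sharp}{t^\ell}\rangle$. The resulting $f$ lives in $(A\langle T^{1/p^\infty}\rangle)^\flat$ and need not have the form $T - h$ with $h \in A^\flat$, so the translation automorphism is unavailable --- which is precisely why the paper proceeds instead by reducing modulo $t^\epsilon$ with $\epsilon = 1/p^{k+1}$ (so small that $t^{\ell/p^k} \equiv 0$) and then using the $k$-fold Frobenius to identify $C_{\ell,k}/t^\epsilon$ with the manifestly free $A/t^{1/p}$-module $A[T^{1/p^\infty},u^{1/p^\infty}]/(t^{1/p}, T-g)$. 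As a secondary point, your final faithfulness argument for $A_\infty$ is also imprecise: an element that is non-almost-zero in each $M \otimes_{A/t} B_\ell/t$ may still become almost zero in the filtered colimit (for each $k$, the class of $t^{1/p^k}(x\otimes 1)$ could vanish at some stage $\ell(k) \to \infty$). The standard fix is to use that almost faithful flatness of $A/t \to B_\ell/t$ makes the unit map $M \to M \otimes_{A/t} B_\ell/t$ almost injective (almost Amitsur), and almost injectivity is preserved under filtered colimits.
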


\begin{proof}
It is enough to show the first statement modulo $t^\epsilon$ for some $\epsilon > 0$. The approximation lemma for perfectoid spaces (as in \cite[Corollary 3.6.7]{KedlayaLiu} or \cite[Corollary 6.7]{ScholzePerfectoidSpaces}) gives an $f \in \big(A\langle T^{\frac{1}{p^\infty}} \rangle\big)^\flat$ such that 
\begin{enumerate}
\item $f^\sharp \equiv T - g \mod t^{\frac{1}{p}}$.
\item We have an equality $Y \langle \frac{T-g}{t^\ell} \rangle = Y \langle \frac{f^\sharp}{t^\ell} \rangle$ of subsets of $Y$.
\end{enumerate}
The explicit description of $\mathcal{O}_Y^+(Y \langle \frac{f^\sharp}{t^\ell} \rangle)$ from \cite[Lemma 6.4]{ScholzePerfectoidSpaces} identifies $B_\ell$ (almost)  with the $t$-adic completion of
\begin{equation}
\label{eq:PresentationBoundedFunctions}
\colim_k \Big(A\langle T^{\frac{1}{p^\infty}}\rangle[u^{\frac{1}{p^k}}] / \big((u \cdot t^\ell)^{\frac{1}{p^k}} - (f^\sharp)^{\frac{1}{p^k}}\big)\Big)    = A\langle T^{\frac{1}{p^\infty}}\rangle[u^{\frac{1}{p^\infty}}] / \big(\forall k:  (u \cdot t^\ell)^{\frac{1}{p^k}} - (f^\sharp)^{\frac{1}{p^k}}\big). 
\end{equation}
Thus, it is enough to show that the $A$-algebra
\[ C_{\ell,k} := A\langle T^{\frac{1}{p^\infty}}\rangle[u^{\frac{1}{p^\infty}}] / \big((u \cdot t^\ell)^{\frac{1}{p^k}} - (f^\sharp)^{\frac{1}{p^k}}\big)\]
is faithfully flat over $A$ after reduction modulo $t^\epsilon$ for some $\epsilon$. We take $\epsilon = \frac{1}{p^{k+1}}$. For this choice, we have $t^{\frac{\ell}{p^k}} \equiv 0 \mod t^\epsilon$, so the relation above simplifies to $(f^\sharp)^{\frac{1}{p^k}} = 0$ modulo $t^\epsilon$. As $f^\sharp \equiv T-g \mod t^{\frac{1}{p}}$, the $k$-fold Frobenius identifies the $A/t^\epsilon$-algebra $C_{\ell,k}/t^\epsilon$ with the $A/t^{\frac{1}{p}}$-algebra $A[T^{\frac{1}{p^\infty}},u^{\frac{1}{p^\infty}}]/(t^{\frac{1}{p}}, T-g)$. The latter is faithfully flat (even free) over $A/t^{\frac{1}{p}}$, so the claim follows.
%
\end{proof}

\begin{remark}
Theorem~\ref{thm:AdjoinRootsDisc} is proven in \cite{AndreDSC} under a more restrictive setup (but with a stronger conclusion). I am grateful to Scholze for pointing out that the same proof goes through in the above generality.
\end{remark}

\begin{remark}
One might worry that the presentations from \cite[Lemma 6.4]{ScholzePerfectoidSpaces} used above are only valid in the non-derived sense, and thus do not play well with reduction modulo $t$ or $t$-adic completion. More precisely, one may ask if \eqref{eq:PresentationBoundedFunctions} is also true if one imposes the corresponding relations in the derived sense (i.e., one works with the corresponding Koszul complexes). While answering this question is not necessary for our purposes, the answer is indeed `yes', and we record it here for psychological comfort, especially since such presentations are also important later. 

\begin{lemma}
Let $A$ be an integral perfectoid $K^\circ$-algebra. Choose $f_1,...,f_n,g \in A^\flat$, and set $B$ to be the direct limit of the Koszul complexes $\mathrm{Kos}(A[T_i^{\frac{1}{p^\infty}}] ; (g^\sharp \cdot T_i)^{\frac{1}{p^m}} - (f_i^\sharp)^{\frac{1}{p^m}})$. Then the Koszul complex $\mathrm{Kos}(B; t)$ is almost discrete. Thus, the derived $t$-adic completion of $B$ is almost isomorphic to the perfectoid algebra $A \langle \frac{f_1}{g},...,\frac{f_n}{g} \rangle$.
\end{lemma}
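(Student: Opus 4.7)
The plan is to prove the two assertions in order: first, the almost discreteness of $\mathrm{Kos}(B;t)$, and then to deduce the identification of the derived $t$-adic completion of $B$ with $A\langle f_1/g,\ldots,f_n/g\rangle$.

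Granting the first, the second follows formally: since $K^\circ/t^s$ admits a finite filtration with graded pieces $K^\circ/t$, almost discreteness of $\mathrm{Kos}(B;t)$ propagates to $\mathrm{Kos}(B;t^s)$ for every $s \geq 1$, whence the derived $t$-adic completion $R\lim_s (B \otimes^L_{K^\circ} K^\circ/t^s)$ is almost concentrated in degree $0$ and agrees (almost) with the classical $t$-adic completion of $H^0(B)$. The latter is identified with $A\langle f_1/g,\ldots,f_n/g\rangle$ by the non-derived form of \cite[Lemma 6.4]{ScholzePerfectoidSpaces}.

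For the first, I would mimic the Frobenius-plus-approximation strategy from the proof of Theorem~\ref{thm:AdjoinRootsDisc}. Since each $B_m$ is a bounded complex of flat (hence $t$-torsion-free) $A$-modules, $\mathrm{Kos}(B;t) = \colim_m (B_m/t)$. It suffices to show, uniformly in $m$, that $B_m / t^\epsilon$ is almost concentrated in degree $0$ for $\epsilon = 1/p^k$ with $k$ arbitrary. The approximation lemma lets one choose elements of $(A\langle T_i^{1/p^\infty}\rangle)^\flat$ whose sharps approximate $(g^\sharp T_i)^{1/p^m} - (f_i^\sharp)^{1/p^m}$ modulo $t^{1/p}$; the $k$-fold Frobenius trick from the proof of Theorem~\ref{thm:AdjoinRootsDisc} then identifies $B_m / t^\epsilon$ almost with a Koszul complex over $A^\flat[T_i^{1/p^\infty}]/(t^\flat)^\epsilon$ on elements of the form $g^{1/p^m} T_i^{1/p^m} - f_i^{1/p^m}$. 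Writing $S_i := T_i^{1/p^m}$, these become $n$ linear polynomials in distinct variables with common leading coefficient $g^{1/p^m}$, which is a regular sequence in the faithfully flat extension $A^\flat[T_i^{1/p^\infty}] = A^\flat[S_i^{1/p^\infty}]$ of $A^\flat[S_1,\ldots,S_n]$ whenever $g$ is a nonzerodivisor in $A^\flat$. The associated Koszul complex is then a resolution, so $B_m/t^\epsilon$ is concentrated in degree $0$; taking the colimit in $m$ yields the almost discreteness of $\mathrm{Kos}(B;t)$.

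The main obstacle is the careful propagation of the $t^{1/p}$-errors coming from the approximation lemma through all the Koszul differentials, through each transition map $B_m \to B_{m+1}$, and through the colimit in $m$, checking that the accumulated errors remain within the $t^{1/p^\infty}$-almost tolerance. A secondary point is the requirement that $g$ be a nonzerodivisor in $A^\flat$; this may fail, but any resulting $g$-torsion is controlled by the implicit assumption in the definition of $A\langle f_i/g\rangle$ that $g$ becomes close to a unit on the relevant rational subset, so $g$-torsion is absorbed within the $t^{1/p^\infty}$-almost category.
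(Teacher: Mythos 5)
There is a genuine gap in your argument, and it centers on the claim that the defining elements form a regular sequence for each fixed $m$. You write that the elements $g^{1/p^m}S_i - f_i^{1/p^m}$ (with $S_i = T_i^{1/p^m}$) form a regular sequence in $A^\flat[S_i^{1/p^\infty}]$ whenever $g$ is a nonzerodivisor. This is false once $n \geq 2$: killing the first relation $g^{1/p^m}S_1 - f_1^{1/p^m}$ typically makes $g^{1/p^m}$ a \emph{zerodivisor} on the quotient, so the second element fails to be regular. Concretely, take $A^\flat = \mathbf{F}_p[[t^{1/p^\infty}]]$, $g = t^{1/p}$, $f_1 = f_2 = t$, $m=0$; then in $A^\flat[S_1^{1/p^\infty}]/(t^{1/p}S_1 - t)$ the element $t^{1/p}$ annihilates $S_1 - t^{(p-1)/p} \neq 0$, and this torsion propagates to a nonzero $H^{-1}$ of the two-variable Koszul complex. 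So the strategy of showing each $B_m/t^\epsilon$ is almost concentrated in degree $0$, uniformly in $m$, is asking for too much and is simply not true.

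The paper's proof does not attempt this: it only needs the \emph{colimit} over $m$ to be discrete, and the mechanism by which the colimit becomes discrete --- even though the individual terms have higher Koszul homology --- is the result from \cite[Lemma 3.16 or Proposition 5.6]{BhattScholzeWitt} that the perfection of the derived ring $\mathrm{Kos}(A[T_i^{1/p^\infty}]; g\cdot T_i - f_i)$ is discrete in characteristic $p$. That ingredient is absent from your argument and cannot be replaced by a fixed-level regularity claim. A secondary, smaller point: since you were given $f_i, g \in A^\flat$ from the start, the elements $(g^\sharp T_i)^{1/p^m} - (f_i^\sharp)^{1/p^m}$ already lift exactly to $\big(A[T_i^{1/p^\infty}]\big)^\flat$, so there is no need for the approximation lemma and no $t^{1/p}$-errors to propagate; the paper exploits this by observing that the termwise reduction mod $t$ is given by literally the same formula over $A$ as over $A^\flat$, which immediately reduces to characteristic $p$. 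Finally, your treatment of the $t$-torsion in $R$ at the very end also needs the perfectness argument given in the paper (if $t\alpha = 0$ then $t\alpha^{p^n} = 0$, hence $t^{1/p^n}\alpha = 0$), which is distinct from the ``$g$-torsion is absorbed'' heuristic you offer.
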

\begin{proof}
Note that $A[T_i^{\frac{1}{p^\infty}}]$ has no $t$-torsion. Thus, the complex $\mathrm{Kos}(B; t)$ is identified with 
\[ M := \colim_m \Big(\mathrm{Kos}(A/t[T_i^{\frac{1}{p^\infty}}]; (g^\sharp \cdot T_i)^{\frac{1}{p^m}} - (f_i^\sharp)^{\frac{1}{p^m}})\Big)\]
since, at level $m$, freely imposing the relations $t=0$ and $(g^\sharp \cdot T_i)^{\frac{1}{p^m}} - (f_i^\sharp)^{\frac{1}{p^m}} = 0$ in the derived sense on the ring $A[T_i^{\frac{1}{p^\infty}}]$ can be done in any order. But now $M$ looks the same for both $A$ and $A^\flat$, so we may assume that $A$ has characteristic $p$ (and so $f_i = f_i^\sharp$, $g = g^\sharp$). In this case, $M$ identifies with $\mathrm{Kos}(R; t)$, where 
 \[ R :=  \colim_m \Big(\mathrm{Kos}(A[T_i^{\frac{1}{p^\infty}}]; (g \cdot T_i)^{\frac{1}{p^m}} - (f_i)^{\frac{1}{p^m}})\Big).\]
But $R$ is discrete: it is the perfection of the derived ring $\mathrm{Kos}(A[T_i^{\frac{1}{p^\infty}}]; g \cdot T_i - f)$, which is always discrete by \cite[Lemma 3.16 or Proposition 5.6]{BhattScholzeWitt}. As $M \simeq \mathrm{Kos}(R; t)$, we are reduced to showing that the $t$-torsion of $R$ is almost zero.  But this follows from perfectness: if $\alpha \in R$ and $t \cdot \alpha = 0$, then $t \cdot \alpha^{p^n} = 0$ for all $n \geq 0$, which, by perfectness, gives $t^{\frac{1}{p^n}} \cdot \alpha = 0$ for all $n \geq 0$, so $\alpha$ is almost zero.
\end{proof}
\noindent In particular, all operations in the proof of Theorem~\ref{thm:AdjoinRootsDisc}  can be interpreted in the derived sense. 
\end{remark}

\begin{remark}
One may upgrade the above techniques to show the following (see \cite[Corollary 9.4.7]{BhattPerfSpaceNotes}): for any integral perfectoid $K^\circ$-algebra $A$, there exists a {\em functorial} map $A \to B(A)$ of integral perfectoid $K^\circ$-algebras that is almost faithfully flat modulo $t$ such that $B(A)$ is absolutely integrally closed, i.e., each monic polynomial has a solution. In particular, any $b \in B(A)$ admits a compatible system $\{b^{\frac{1}{p^n}}\}_{n \geq 1}$ of $p$-power roots.
\end{remark}

\section{Almost-pro-zero modules}
\label{sec:ProAlmostZero}

We introduce the relevant notion of almost mathematics in the pro-category necessary for Theorem~\ref{thm:RiemannExtensionFinitisticIntro}.

\begin{notation}
Let $A$ be a ring equipped with a nonzerodivisor $t$ together with a specified collection $\{t^{\frac{1}{p^k}}\}$ of compatible $p$-power roots. All occurrences of almost mathematics in this section are with respect to $t^{\frac{1}{p^\infty}}$.
\end{notation}

There is an intrinsic notion of almost mathematics of pro-$A$-modules: one might simply work with pro-objects in the almost category. For example, a projective system $\{M_n\}_{n \geq 1}$ of $A$-modules is `almost-zero' as a pro-object if for any $n \geq 1$, there exists some $m = m(n) \geq n$ such that the map $M_m \to M_n$ has image annihilated by $t^{\frac{1}{p^k}}$ for all $k$. This intrinsic notion is too strong for our purposes, and we use the following weakening, where $m$ depends on $k$:

\begin{definition}
A pro-$A$-module $\{M_n\}_{n \geq 1}$ is said to be {\em almost-pro-zero} if for any $k \geq 0$ and any $n \geq 1$, there exists some $m = m(n,k) \geq n$ such that $\mathrm{im}(M_m \to M_n)$ is killed by $t^{\frac{1}{p^k}}$; equivalently, for each $k \geq 0$, the map $\{M_n[t^{\frac{1}{p^k}}]\}_{n \geq 1} \to \{M_n\}_{n \geq 1}$ is a pro-isomorphism in the usual sense. A map of pro-objects in $D^b(A)$ is said to be an {\em almost-pro-isomorphism} if the cohomology groups of cones form an almost-pro-zero system.
\end{definition}

We begin with an example illustrating the novel features of this notion:

\begin{example}
Consider the system $\{M_n\}$ where $M_n = A/(t^{\frac{1}{p^n}})$, and $M_{n+1} \to M_n$ is the injective map defined by $1 \mapsto t^{\frac{1}{p^n} - \frac{1}{p^{n+1}}}$. Then $\{M_n\}$ is almost-pro-zero (in fact, each $M_m$ is killed by $t^{\frac{1}{p^k}}$ for $m \geq k$), even though the corresponding pro-object of the almost category is not zero.
\end{example}

The next few lemmas record the stability properties of this notion:

\begin{lemma}
\label{lem:PAIAlmostZero}
If $\{M_n\}_{n \geq 1}$ is an almost-pro-zero pro-$A$-module, then the complex $R\lim(\{M_n\}_{n \geq 1})$ is almost zero, i.e., it has almost zero cohomology groups. 
\end{lemma}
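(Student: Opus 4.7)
The plan is to reduce to the standard fact that $R\lim$ annihilates pro-systems that are pro-zero in the strong (intrinsic) sense, via the reformulation built into the definition: for each $k \geq 0$, the inclusion $\{M_n[t^{1/p^k}]\}_{n} \hookrightarrow \{M_n\}_n$ is a pro-isomorphism. Equivalently, the cokernel pro-system $\{M_n/M_n[t^{1/p^k}]\}_n$ is pro-zero in the strong sense (for every $n$ there is an $m \geq n$ such that the transition map is literally zero), which is exactly the hypothesis that $t^{1/p^k}$ annihilates $\mathrm{im}(M_m \to M_n)$.

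The auxiliary fact to invoke is: if $\{N_n\}_n$ is a pro-system of $A$-modules that is pro-zero in this strong sense, then $R\lim\{N_n\}_n \simeq 0$. For the ordinary limit, any compatible family $(x_n)$ must vanish termwise, since each $x_n$ is the image of some $x_m$ under a zero map; for $\lim^1$, the images $\mathrm{im}(N_m \to N_n)$ eventually stabilize at zero as $m \to \infty$, so the Mittag-Leffler condition holds and $\lim^1\{N_n\} = 0$. Applied to the short exact sequence of pro-systems
\[
0 \to \{M_n[t^{1/p^k}]\}_n \to \{M_n\}_n \to \{M_n/M_n[t^{1/p^k}]\}_n \to 0,
\]
the long exact sequence for $R\lim$ yields a quasi-isomorphism $R\lim\{M_n[t^{1/p^k}]\}_n \xrightarrow{\sim} R\lim\{M_n\}_n$.

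Now $R\lim\{M_n[t^{1/p^k}]\}_n$ can be computed by the standard two-term complex $\prod_n M_n[t^{1/p^k}] \to \prod_n M_n[t^{1/p^k}]$, whose terms are killed by $t^{1/p^k}$; hence every cohomology group of $R\lim\{M_n[t^{1/p^k}]\}_n$ is killed by $t^{1/p^k}$, and via the isomorphism the same holds for $R\lim\{M_n\}_n$. Letting $k$ vary over all nonnegative integers shows that each cohomology group of $R\lim\{M_n\}_n$ is annihilated by $t^{1/p^k}$ for all $k$, i.e., is almost zero, as desired.

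I do not anticipate a real obstacle; the only mildly technical point is the vanishing of $R\lim$ on strongly pro-zero systems, which is entirely standard once packaged through Mittag-Leffler. The essential content of the lemma is that the definition of almost-pro-zero was chosen precisely so that, for each fixed $k$, one can replace $\{M_n\}$ with its $t^{1/p^k}$-torsion subsystem up to pro-isomorphism, after which the $t^{1/p^k}$-annihilation of $R\lim$ is automatic.
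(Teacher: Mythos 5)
Your proof is correct and follows exactly the same approach as the paper: fix $k$, use the definitional reformulation that $\{M_n[t^{1/p^k}]\}\to\{M_n\}$ is a pro-isomorphism, conclude both have the same $R\lim$, and note the left side has cohomology killed by $t^{1/p^k}$. The paper simply cites that pro-isomorphisms induce isomorphisms on $R\lim$, whereas you spell out that step via the short exact sequence and Mittag-Leffler; the substance is identical.
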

\begin{proof}
Fix  $k \geq 0$. Then the inclusion $\{M_n[t^{\frac{1}{p^k}}]\}_{n \geq 1} \to \{M_n\}_{n \geq 1}$ is a pro-isomorphism, so both sides have the same $R\lim$. In particular, the cohomology groups of $R\lim(\{M_n\}_{n \geq 1})$ are killed by $t^{\frac{1}{p^k}}$.
\end{proof}

\begin{lemma}
\label{lem:PAIAlmostIso}
If $\{N_n\}_{n \geq 1} \to \{M_n\}_{n \geq 1}$ is an almost-pro-isomorphism in $D^b(A)$, then $R\lim(\{N_n\}_{n \geq 1}) \to R\lim(\{M_n\}_{n \geq 1})$ is an almost isomorphism.
\end{lemma}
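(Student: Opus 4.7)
The plan is to reduce the claim to Lemma~\ref{lem:PAIAlmostZero} by taking cones. After re-indexing if necessary, represent the given map of pro-objects by a levelwise map $N_n \to M_n$ at the chain-complex level, and form the pro-system $\{C_n\}$ of levelwise cones in $D^b(A)$. This produces a levelwise distinguished triangle $N_n \to M_n \to C_n$, and applying $R\lim$ yields a distinguished triangle
\[ R\lim_n N_n \to R\lim_n M_n \to R\lim_n C_n \]
in $D(A)$. It therefore suffices to show that $R\lim_n C_n$ has almost zero cohomology in every degree, given that each pro-system $\{H^i(C_n)\}_n$ is almost-pro-zero by hypothesis.

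For each integer $i$, I would invoke the Milnor short exact sequence
\[ 0 \to R^1\lim_n H^{i-1}(C_n) \to H^i(R\lim_n C_n) \to \lim_n H^i(C_n) \to 0, \]
valid for any $\mathbf{N}$-indexed inverse system of complexes since $R^p\lim$ vanishes for $p \geq 2$ on such systems (so the hypercohomology spectral sequence for $R\lim$ degenerates, with no convergence hypothesis required). Now Lemma~\ref{lem:PAIAlmostZero} applied to the almost-pro-zero pro-systems $\{H^{i-1}(C_n)\}$ and $\{H^i(C_n)\}$ shows that the complexes $R\lim_n H^{i-1}(C_n)$ and $R\lim_n H^i(C_n)$ both have almost zero cohomology, so the two outer terms of the displayed short exact sequence are almost zero. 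Since the almost zero $A$-modules form a Serre subcategory, the middle term $H^i(R\lim_n C_n)$ is also almost zero, yielding the conclusion.

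The one mildly technical point is the levelwise formation of the cone: a morphism in the pro-category of $D^b(A)$ need not a priori come from a levelwise map, but a standard re-indexing plus chain-level resolution argument allows one to arrange this, after which the levelwise cones assemble into a well-defined pro-object of $D^b(A)$ whose cohomology groups coincide with the almost-pro-zero systems implicit in the hypothesis. Everything else is a formal consequence of Lemma~\ref{lem:PAIAlmostZero} and the Milnor exact sequence above.
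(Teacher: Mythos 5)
Your proof is correct and follows the same route the paper takes, namely ``apply Lemma~\ref{lem:PAIAlmostZero} to the cone''; you have simply spelled out the two standard points the paper leaves implicit (representing the pro-map by a levelwise map so that the cone pro-system is well-defined, and the Milnor $\lim^1$ sequence to pass from the almost-pro-zero cohomology groups of the cone to almost vanishing of the cohomology of $R\lim$ of the cone).
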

\begin{proof}
This follows by applying Lemma~\ref{lem:PAIAlmostZero} to the cone.
\end{proof}

\begin{lemma}
\label{lem:PAIAlinear}
If $\{M_n\}_{n \geq 1}$ is an almost-pro-zero pro-$A$-module, and $F:\mathrm{Mod}_A \to \mathrm{Mod}_A$ is an $A$-linear functor, then $\{F(M_n)\}_{n \geq 1}$ is also almost-pro-zero.
\end{lemma}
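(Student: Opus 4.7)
The plan is to reformulate the defining condition of almost-pro-zero purely in terms of composition of morphisms, so that applying the $A$-linear functor $F$ preserves it automatically.

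\textbf{Step 1: Reformulation.} Fix $k \geq 0$ and $n \geq 1$. By hypothesis on $\{M_n\}_{n\geq 1}$, there exists $m \geq n$ such that the image of the transition map $\phi_{m,n} : M_m \to M_n$ is annihilated by $t^{\frac{1}{p^k}}$. I would rewrite this condition as the vanishing of the composition
\[
M_m \xrightarrow{\phi_{m,n}} M_n \xrightarrow{\,\cdot\, t^{\frac{1}{p^k}}\,} M_n,
\]
i.e., $(\cdot\, t^{\frac{1}{p^k}}) \circ \phi_{m,n} = 0$ as an element of $\mathrm{Hom}_A(M_m, M_n)$.

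\textbf{Step 2: Transporting along $F$.} Applying the $A$-linear functor $F$ and using functoriality yields
\[
F\bigl((\cdot\, t^{\frac{1}{p^k}}) \circ \phi_{m,n}\bigr) \;=\; F(\cdot\, t^{\frac{1}{p^k}}) \circ F(\phi_{m,n}).
\]
Since $F$ is $A$-linear, $F$ sends the multiplication-by-$t^{\frac{1}{p^k}}$ map on $M_n$ to the multiplication-by-$t^{\frac{1}{p^k}}$ map on $F(M_n)$. Thus the above composition equals $(\cdot\, t^{\frac{1}{p^k}}) \circ F(\phi_{m,n})$, and it vanishes as $F$ sends zero maps to zero maps. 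Translating back, the image of $F(M_m) \to F(M_n)$ is annihilated by $t^{\frac{1}{p^k}}$.

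\textbf{Step 3: Conclusion.} The same witness $m = m(n,k)$ that shows $\{M_n\}$ is almost-pro-zero therefore shows that $\{F(M_n)\}$ is almost-pro-zero, completing the proof. There is no genuine obstacle here: the content is entirely the observation that ``image is killed by $t^{\frac{1}{p^k}}$'' is equivalent to the vanishing of a certain composition, and any $A$-linear functor preserves compositions of $A$-linear maps (and in particular preserves multiplication-by-scalar maps).
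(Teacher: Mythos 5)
Your proof is correct and is essentially the same argument as the paper's, just phrased differently. The paper factors $M_m \to M_n$ through the torsion submodule $M_n[t^{\frac{1}{p^k}}]$ and then applies $F$, observing that $F$ of a $t^{\frac{1}{p^k}}$-torsion module is again $t^{\frac{1}{p^k}}$-torsion by $A$-linearity; you instead encode "image killed by $t^{\frac{1}{p^k}}$" as the vanishing of the composite with the multiplication map and push that through $F$. Both proofs hinge on the single fact that an $A$-linear functor sends multiplication-by-$t^{\frac{1}{p^k}}$ to multiplication-by-$t^{\frac{1}{p^k}}$; your composition-based formulation is, if anything, marginally cleaner since it avoids mentioning submodule inclusions.
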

\begin{proof}
Fix  $k \geq 0$, $n \geq 1$. Then $M_m \to M_n$ factors over $M_n[t^{\frac{1}{p^k}}] \subset M_n$ for some $m \geq n$. But then $F(M_m) \to F(M_n)$ factors over $F(M_n[t^{\frac{1}{p^k}}]) \to F(M_n)$, and hence over $F(M_n)[t^{\frac{1}{p^k}}] \hookrightarrow F(M_n)$, by the $A$-linearity of $F$. 
\end{proof}

\section{A quantitative form of the Riemann extension theorem}
\label{sec:QuantRET}

\begin{notation}
\label{not:QuantRET}
Let $A$ be an integral perfectoid $K^\circ$-algebra with associated perfectoid space $X := \mathrm{Spa}(A[\frac{1}{t}],A)$. Fix an element $g \in A$ that admits a compatible system of $p$-power roots $g^{\frac{1}{p^k}}$. Assume\footnote{This assumption is not actually necessary, and can be dropped {\em a posteriori}; see Remark~\ref{rmk:RETnzd}. It is also harmless in applications.} that $g$ is a nonzerodivisor modulo $t^m$ in the almost sense (with respect to $t^{\frac{1}{p^\infty}}$). 
\end{notation}

In this section, we prove Theorem~\ref{thm:RiemannExtensionFinitisticIntro}. Thus, we study the rings $A \langle \frac{t^n}{g} \rangle := \mathcal{O}_X^+(X\langle \frac{t^n}{g} \rangle)$ and their variation with $n$. More precisely, we show the following quantitative form of Scholze's Hebbarkeitssatz \cite[Proposition II.3.2]{ScholzeTorsion}:

\begin{theorem}
\label{thm:QuantRET}
For each $m \geq 0$, consider the natural projective system of maps
\[ \{ f_n: A/t^m  \to A \langle \frac{t^n}{g} \rangle/t^m\}_{n \geq 1}\]
in almost mathematics with respect to $t^{\frac{1}{p^\infty}}$. Then we have:
\begin{enumerate}
\item Each $\mathrm{ker}(f_n)$ is almost zero.
\item The pro-system $\{\mathrm{coker}(f_n)\}_{n \geq 1}$ is uniformly almost-pro-zero with respect to $g^{\frac{1}{p^\infty}}$, i.e., for any $k \geq 0$, there exists some $c \geq 0$ such that for any $n \geq 0$, the image of the $c$-fold transition map $\mathrm{coker}(f_{n+c}) \to \mathrm{coker}(f_n)$ is killed by $g^{\frac{1}{p^k}}$. (In fact, $c = p^k m$ works.)
\end{enumerate}
In particular, the projective system $\{f_n\}_{n \geq 1}$ is an almost-pro-isomorphism with respect to $(tg)^{\frac{1}{p^\infty}}$.
\end{theorem}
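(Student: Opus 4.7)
The plan is to use the presentation of $A\langle t^n/g\rangle$ from Section~\ref{sec:AdjoinRootsDisc}: almost and after $t$-adic completion, it agrees with the filtered colimit $B_n := \colim_k B_n^{(k)}$, where
\[ B_n^{(k)} := A[u^{1/p^k}]/(g^{1/p^k} u^{1/p^k} - t^{n/p^k}). \]
Since $B_n$ is almost $t$-torsion-free by the discreteness lemma at the end of that section, I get $A\langle t^n/g\rangle/t^m \stackrel{a}{\simeq} B_n/t^m$, reducing Theorem~\ref{thm:QuantRET} to explicit computations with the $B_n^{(k)}$.

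For (1), I would work one level $k$ at a time. Suppose $a \in A$ is in the kernel of $A/t^m \to B_n^{(k)}/t^m$, so lifting gives an equation $a = t^m b + (g^{1/p^k} u^{1/p^k} - t^{n/p^k}) c$ in $A[u^{1/p^k}]$. Specializing $u^{1/p^k} \mapsto t^{n/p^k}/g^{1/p^k}$ in $A[1/g]$ kills the second summand; clearing the resulting denominators produces $g^{L/p^k} a \in t^m A$ for some integer $L$ bounded by $\deg_u b$. By the hypothesis in Notation~\ref{not:QuantRET}, $g^{1/p^k}$ is almost a nonzerodivisor modulo $t^m$, and iterating the almost-nonzerodivisor property $L$ times converts $g^{L/p^k} a \in t^m A$ into $t^\epsilon a \in t^m A$ for every $\epsilon \in \mathbf{Z}[1/p]_{>0}$. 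Taking the colimit over $k$ gives (1).

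For (2), the key identity is that $g^\beta u_n^\beta = t^{n\beta}$ in $B_n$ for every $\beta \in \mathbf{Z}[1/p]_{\geq 0}$ (raising the level-$k$ defining relation to the $j$-th power for $\beta = j/p^k$). Consequently, in $\mathrm{coker}(f_n)$ the class of every monomial $u_n^\beta$ is annihilated by $g^\beta$. Moreover, the transition $A\langle t^{n+c}/g\rangle \to A\langle t^n/g\rangle$ is induced by $u_{n+c} \mapsto t^c u_n$, hence sends $u_{n+c}^\beta \mapsto t^{c\beta} u_n^\beta$. Choose $c = mp^k$ and split on $\beta$. If $\beta > 1/p^k$, then $c\beta > m$ and $t^{c\beta} u_n^\beta$ already vanishes in $B_n/t^m$. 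If $0 < \beta \leq 1/p^k$, then the key identity yields
\[ g^{1/p^k} \cdot t^{c\beta} u_n^\beta = g^{1/p^k - \beta} \cdot t^{c\beta} (g^\beta u_n^\beta) = g^{1/p^k - \beta} \cdot t^{(n+c)\beta} \in A/t^m, \]
which is zero in $\mathrm{coker}(f_n)$. In either case $g^{1/p^k}$ annihilates the image of $u_{n+c}^\beta$ in $\mathrm{coker}(f_n)$; since such monomials $A/t^m$-generate the image of $B_{n+c}/t^m$ in $B_n/t^m$, this yields (2) with the claimed $c = mp^k$.

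The main obstacle I expect is the preliminary bookkeeping that identifies $A\langle t^n/g\rangle/t^m$ almost with $B_n/t^m$; once this compatibility between the algebraic presentation and the analytic completion is in place, both (1) and (2) reduce to the elementary case analysis above, reflecting the ``simple and linear'' character of the statement advertised in the introduction.
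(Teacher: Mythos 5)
Your proof is correct and takes essentially the same approach as the paper: the paper also reduces to the explicit presentation $M_n := A[u_n^{1/p^\infty}]/(t^m, \forall k: (u_n g)^{1/p^k} - t^{n/p^k}) \stackrel{a}{\simeq} A\langle \frac{t^n}{g}\rangle/t^m$, and proves (2) by the identical case split on whether the exponent $e$ is $\geq \frac{1}{p^k}$ or $< \frac{1}{p^k}$ with the same constant $c = p^k m$. For (1) the paper simply asserts that each $f_n$ is injective once $g$ is a nonzerodivisor modulo $t^m$, while you spell out the elimination argument in slightly more detail, but the content is the same.
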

\begin{proof}
Fix some integer $m \geq 0$. We use the explicit presentations for $A\langle \frac{t^n}{g} \rangle/t^m$ coming from the perfectoid theory. By \cite[Lemma 6.4]{ScholzePerfectoidSpaces}, there is almost isomorphism (with respect to $t^{\frac{1}{p^\infty}}$)
\[ M_n :=  A[u_n^{\frac{1}{p^\infty}}]/\big(t^m, \forall k: (u_n \cdot g)^{\frac{1}{p^k}} - t^{\frac{n}{p^k}}\big)  \stackrel{a}{\simeq} A\langle \frac{t^n}{g} \rangle/t^m.\]
defined by  viewing $u_n^{\frac{1}{p^k}}$ as the function $(\frac{t^n}{g})^{\frac{1}{p^k}}$. It is thus enough show the assertions in the theorem for the pro-system
\[ \{f_n: A/t^m \to M_n\}_{n \geq 1}\]
of obvious maps. As $g$ is a nonzerodivisor modulo $t^m$, the same holds true for $g^{\frac{1}{p^k}}$. It is then easy see that each $f_n$ is injective, so the kernels are $0$ on the nose. For the cokernels, fix some $k \geq 0$. We shall show that any $c \geq p^k \cdot m$ works, i.e., for such $c$, the element $g^{\frac{1}{p^k}} \cdot u_{n+c}^e \in M_{n+c}$ maps into $A/t^m \subset M_n$ under the $c$-fold transition map $M_{n+c} \to M_n$ for all exponents $e \in \mathbf{N}[\frac{1}{p}]$. By construction, the transition map carries $g^{\frac{1}{p^k}} \cdot u_{n+c}^e$ to
\[ g^{\frac{1}{p^k}} \cdot t^{ce} \cdot u_n^e \in M_n,\]
so we must show this last expression lies in $A/t^m \subset M_n$ for $c \geq p^k m$ and all $e \in \mathbf{N}[\frac{1}{p}]$. There are two cases:
\begin{itemize}
\item If $e \geq \frac{1}{p^k}$, then $t^m \mid t^{ce}$ as $c \geq p^k m$, so the above expression is zero as we work modulo $t^m$.
\item If $e < \frac{1}{p^k}$, then the above expression can be written as
\[ g^{\frac{1}{p^k}} \cdot t^{ce} \cdot u_n^e  = g^{\frac{1}{p^k} - e} \cdot g^{e} \cdot t^{ce} \cdot u_n^e = g^{\frac{1}{p^k} - e}  \cdot t^{ce} \cdot (g \cdot u_n)^e = g^{\frac{1}{p^k} - e}  \cdot t^{ce} \cdot t^{ne} = g^{\frac{1}{p^k} -e} \cdot t^{(n+c)e} \in A/t^m \subset M_n,  \] 
as wanted. \qedhere
\end{itemize}
\end{proof}

\begin{remark}
Theorem~\ref{thm:QuantRET} shows that the map $\{ A/t^m \}_{n \geq 1} \to \{A \langle \frac{t^n}{g} \rangle/t^m\}_{n \geq 1}$ is a {\em uniform} almost-pro-isomorphism with respect to $(tg)^{\frac{1}{p^\infty}}$, i.e., the constant $c$ appearing in the theorem is independent of $n$. It formally follows that for any $A/t^m$-complex $K$, the kernel and cokernel pro-systems of the induced 
\[ \{ H^i(K) \}_{n \geq 1} \to \{H^i(K \otimes^L_{A/t^m} A \langle \frac{t^n}{g} \rangle/t^m)\}_{n \geq 1}\]
are both uniformly almost-pro-zero in the preceding sense and with the same implicit constants. In particular, when applied to Koszul complexes arising from regular sequences of elements in $A/t^m$, we learn that the homology of the corresponding pro-system of Koszul complexes on $\{A \langle \frac{t^n}{g} \rangle/t^m\}$ is uniformly almost-pro-zero in nonzero degrees.
\end{remark}

\begin{remark}
\label{rmk:RETnzd}
The assumption that $g$ is a nonzerodivisor modulo $t^m$ in Notation~\ref{not:QuantRET} can be dropped without affecting the conclusion of the final statement of Theorem~\ref{thm:QuantRET}. Indeed, consider first the universal case $R := K^\circ \langle T^{\frac{1}{p^\infty}} \rangle$ with $g=T$. This falls under the case that is already treated, so we have an almost-pro-isomorphism
\[ \{R/t^m\}_{n \geq 1} \to \{R \langle \frac{t^n}{T} \rangle/t^m\}_{n \geq 1}\]
with respect to $(tT)^{\frac{1}{p^\infty}}$. For general $A$ and $g$, there is a unique map $R \to A$ carrying $T^{\frac{1}{p^k}}$ to $g^{\frac{1}{p^k}}$ for all $k$. By base change, we have an almost-pro-isomorphism
\[ \{A/t^m\}_{n \geq 1} \to \{R \langle \frac{t^n}{T} \rangle \otimes^L_R A/t^m\}_{n \geq 1}\]
with respect to $(tg)^{\frac{1}{p^\infty}}$. The explicit description of \cite[Lemma 6.4]{ScholzePerfectoidSpaces} shows that
\[R \langle \frac{t^n}{T} \rangle \otimes_R A/t^m \stackrel{a}{\simeq} A\langle \frac{t^n}{g} \rangle/t^m.\]
In particular, applying $H^0$ to the almost-pro-isomorphism above gives the desired statement.
\end{remark}

\section{The direct summand conjecture}
\label{sec:DSC}

In this section, we prove Theorem~\ref{thm:DSCIntro}. We begin by collecting some preliminaries that shall be useful in the proof. The following proposition is borrowed from \cite[Lemma 4.30 and Remark 4.31]{BhattMorrowScholze}, and is presumably well-known:

\begin{proposition}
\label{prop:FaithfulFlatnessNoetherian}
Let $A \to B$ be a map of commutative rings with $A$ is noetherian. Assume that there exists some  $\pi \in A$ such that both $A$ and $B$ are $\pi$-torsionfree and $\pi$-adically complete, and $A/\pi \to B/\pi$ is (faithfully) flat. Then $A \to B$ is (faithfully) flat.
\end{proposition}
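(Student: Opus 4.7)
The plan is to invoke the local criterion of flatness for $\pi$-adically complete modules over a noetherian ring, which reduces the flatness of $B$ over $A$ to three conditions: (i) flatness of $B/\pi$ over $A/\pi$ (given), (ii) the vanishing $\mathrm{Tor}_1^A(A/\pi, B) = 0$, and (iii) a suitable separation/completeness condition on $B$. For faithful flatness, I would then exploit the fact that $\pi$ lies in the Jacobson radical of $A$.

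For (ii), I would resolve $A/\pi$ by $0 \to A \xrightarrow{\pi} A \to A/\pi \to 0$; tensoring with $B$ identifies $\mathrm{Tor}_1^A(A/\pi, B)$ with the $\pi$-torsion submodule $B[\pi]$, which vanishes by the $\pi$-torsionfreeness of $B$. For (iii), the hypothesis that $B$ is $\pi$-adically complete, combined with the noetherianness of $A$, ensures that $B$ is $\pi$-adically ideally separated as an $A$-module (an Artin--Rees style argument: for any finitely generated ideal $\mathfrak{a} \subset A$, the natural topology on $\mathfrak{a} \otimes_A B$ is Hausdorff). This is the precise hypothesis required by the local criterion of flatness (in the form found in EGA $0_{\mathrm{III}}$ 10.2.1, or Stacks Tag 0523), which then immediately outputs the flatness of $A \to B$.

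To upgrade to faithful flatness when $A/\pi \to B/\pi$ is faithfully flat, I would verify $\mathfrak{m} B \neq B$ for every maximal ideal $\mathfrak{m} \subset A$. The $\pi$-adic completeness of $A$ implies that $1 + \pi A \subset A^\times$ (by convergence of the geometric series $\sum (-x)^n$ for $x \in \pi A$), so $\pi$ lies in the Jacobson radical of $A$; in particular every maximal ideal $\mathfrak{m}$ contains $\pi$, and its image $\overline{\mathfrak{m}} \subset A/\pi$ is maximal. The faithful flatness of $A/\pi \to B/\pi$ then gives $\overline{\mathfrak{m}} \cdot (B/\pi) \neq B/\pi$, i.e., $\mathfrak{m} B + \pi B \neq B$, which forces $\mathfrak{m} B \neq B$ by Nakayama applied to $B/\mathfrak{m}B$ (using that $\pi$ is also in the Jacobson radical of the $\pi$-adically complete ring $B$).

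The main obstacle here is really only the careful verification of the ideal-separation hypothesis in the local criterion; this is precisely the step that uses the noetherianness of $A$ essentially (without which the comparison of topologies underlying Artin--Rees can fail). Everything else is a formal consequence of the $\pi$-torsionfreeness and $\pi$-adic completeness hypotheses.
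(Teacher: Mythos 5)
Your reduction to the EGA/Bourbaki local criterion of flatness is a reasonable plan, and the verification of the $\mathrm{Tor}_1$-vanishing and the faithful-flatness upgrade are both fine (though note that $\mathfrak{m}B + \pi B \neq B$ already implies $\mathfrak{m}B \neq B$ trivially, with no Nakayama needed). However, there is a genuine gap at the step you dismiss as ``an Artin--Rees style argument,'' namely the verification that $B$ is $\pi$-adically \emph{ideally separated}, i.e., that $\mathfrak{a} \otimes_A B$ is $\pi$-adically Hausdorff for every ideal $\mathfrak{a} \subset A$.

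The problem is that the standard Artin--Rees machinery controls the $I$-adic topology on finitely generated modules over a \emph{noetherian} ring. Here $\mathfrak{a} \otimes_A B$ is not finitely generated over $A$, and while it is finitely generated over $B$, the ring $B$ is not assumed noetherian (indeed, in the paper's applications $B$ is a large perfectoid ring). So neither Artin--Rees over $A$ nor over $B$ applies to the module in question, and completeness of $B$ alone does not force quotients of $B^r$ by finitely generated $B$-submodules to be separated. In fact, one can check that the separation condition is \emph{equivalent} to the Tor-vanishing one is trying to prove: combining Artin--Rees over the noetherian ring $A$ (to compare $\mathfrak{a}/\pi^n\mathfrak{a}$ with $(\mathfrak{a}+\pi^n)/\pi^n$) with the flatness of $A/\pi^n \to B/\pi^n$, one shows that $\mathrm{Tor}_1^A(A/\mathfrak{a},B) \subseteq \bigcap_m \pi^m(\mathfrak{a} \otimes_A B)$, and the reverse inclusion is automatic since $B$ itself is separated. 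Thus $\bigcap_m \pi^m(\mathfrak{a} \otimes_A B) = \mathrm{Tor}_1^A(A/\mathfrak{a},B)$, and asserting ideal separation for $\mathfrak{a}$ is exactly asserting $\mathrm{Tor}_1^A(A/\mathfrak{a},B)=0$. Your proposal therefore reduces the problem to an equivalent problem without solving it. The paper's proof is engineered precisely to avoid this circularity: it works with bounded complexes of finite free modules and passes to $R\lim$, using noetherianness of $A$ only to control the $\pi$-power torsion of finitely generated $A$-modules (making $\{M\otimes^L_A A/\pi^n\}$ pro-isomorphic to $\{M/\pi^n\}$), and using completeness of $B$ only through the identification $P^\bullet \otimes_A B \simeq \lim_n P^\bullet \otimes_A B/\pi^n$ for finite free $P^\bullet$. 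No topological separation of the non-noetherian tensor products $\mathfrak{a}\otimes_A B$ is ever needed. If you want to salvage the local-criterion route, you would have to cite a version of the criterion stated directly for complete modules over a noetherian base (such a statement does exist, e.g.\ as a form of \cite[Lemma 4.30]{BhattMorrowScholze}), but its proof is essentially the argument given in the paper.
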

\begin{proof}
For flatness: we must check that $M \otimes_A^L B$ lies in $D^{\geq 0}$ for any finitely generated $A$-module $M$. As $A$ is noetherian, we can choose a resolution $P^\bullet \to M$ with each $P^i$ being finite free. The complex $M \otimes_A^L B$ is then computed by $P^\bullet \otimes_A B$. As $B$ is $\pi$-adically complete, we have $P^\bullet \otimes_A B \simeq \lim_n P^\bullet \otimes_A B/\pi^n$ at the level of complexes. The transition maps in the system on the right are termwise surjective, so we can write this more intrinsically as
\[ M \otimes_A^L B \simeq R\lim_n (M \otimes_A^L B/\pi^n) \simeq R\lim_n ((M \otimes_A^L A/\pi^n) \otimes_{A/\pi^n}^L B/\pi^n).\]
As $M$ is finitely generated, the pro-$A$-complex $\{M \otimes_A^L A/\pi^n\}$ is pro-isomorphic to $\{M/\pi^n\}$: the obstruction is the pro-system $\{M[\pi^n]\}$, which is pro-zero as the $\pi^\infty$-torsion of $M$ is bounded by finite generation. Thus, we obtain
\[ M \otimes_A^L B \simeq R\lim_n (M/\pi^n \otimes_{A/\pi^n}^L B/\pi^n).\]
 As $A/\pi \to B/\pi$ is flat, the same holds true for $A/\pi^n \to B/\pi^n$ as $\pi$ is a nonzerodivisor on both $A$ and $B$. In particular, the terms showing up inside the limit lie in $D^{\geq 0}$, so the same holds true for the limit, as wanted.
 
 For faithful flatness, we must check that $\mathrm{Spec}(B) \to \mathrm{Spec}(A)$ is surjective if $A \to B$ is flat and $A/\pi \to B/\pi$ is faithfully flat. As the image is stable under generalizations by flatness, it suffices to check that all closed points lie in the image; equivalently, we must show that $A/\mathfrak{m} \otimes_A B \neq 0$ for any maximal ideal $\mathfrak{m}$ in $A$. But $\pi \in \mathfrak{m}$ as $A$ is $\pi$-adically complete, so $A/\mathfrak{m} \otimes_A B \simeq A/\mathfrak{m} \otimes_{A/\pi} B/\pi$, which is nonzero by faithful flatness of $A/\pi \to B/\pi$.
\end{proof}

Next, we explain why regular local rings admit faithfully flat covers by perfectoids. 

\begin{proposition}
\label{prop:RegularPerfectoidFaithfullyFlat}
Let $A_0$ be a $p$-torsionfree noetherian regular local ring whose residue characteristic is $p$. Then there exists a map $A_0 \to A$ such that
\begin{enumerate}
\item The ring $A$ admits the structure of an integral perfectoid $K$-algebra for $K = \widehat{\mathbf{Q}_p(p^{\frac{1}{p^\infty}})}$.
\item The map $A_0 \to A$ is ``almost faithfully flat" in the following sense: for any $A_0$-module $M$, we have
\begin{enumerate}
\item $\mathrm{Tor}_i^{A_0}(M,A)$ is almost zero for $i > 0$.
\item If $M \otimes_{A_0} A$ is almost zero, then $M = 0$.
\end{enumerate}
\end{enumerate}
\end{proposition}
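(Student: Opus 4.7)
My plan is to reduce to the complete case via faithful flatness of completion, apply Cohen's structure theorem to present $A_0$ as a module-finite extension of a power series subring over a Cohen ring, and then construct $A$ as the base-change along an explicit perfectoid cover of this power series ring. For the reduction: since $A_0$ is noetherian local, the $\mathfrak{m}$-adic completion map $A_0 \to \widehat{A_0}$ is faithfully flat. As $p \in \mathfrak{m}$, the completion $\widehat{A_0}$ is automatically $p$-adically complete and remains a regular local ring of mixed characteristic $(0,p)$ with residue field $k$. Replacing $A_0$ by $\widehat{A_0}$, I may assume $A_0$ is complete.

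By Cohen's structure theorem, $A_0$ is then module-finite and flat over a power series subring $R := W(k)[[x_1, \ldots, x_{d-1}]]$, where $W(k)$ denotes the Cohen ring of $k$ and $d = \dim A_0$; in the unramified case $A_0 = R$, while in the ramified case the inclusion is an Eisenstein-type extension along a uniformizer for $p$. Letting $\bar{k}$ be a perfect closure of $k$, I would take $R_\infty$ to be the $p$-adic completion of the ring $W(\bar{k})[p^{1/p^\infty}, x_1^{1/p^\infty}, \ldots, x_{d-1}^{1/p^\infty}]$. This is integral perfectoid as a $K^\circ$-algebra: it is $p$-torsionfree and $p$-adically complete by construction, and Frobenius modulo $p^{1/p}$ is surjective since all generators carry compatible $p$-power roots. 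The map $R \to R_\infty$ is faithfully flat modulo $p$, because its reduction is the composite of the faithfully flat residue-field extension $k \to \bar{k}$ and a colimit of free Frobenius-twisted extensions obtained by successively adjoining $p$-power roots of the $x_i$; Proposition~\ref{prop:FaithfulFlatnessNoetherian} then upgrades this to faithful flatness of $R \to R_\infty$.

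Finally, I would set $A := (A_0 \otimes_R R_\infty)^{\wedge}_p$, applying the correction $A \mapsto A_\ast$ if necessary to arrange $A = A_\ast$. For any $A_0$-module $M$ one has $M \otimes^{L}_{A_0} A \simeq M \otimes^{L}_R R_\infty$ up to completion corrections that vanish for finitely generated $M$ by a Mittag-Leffler argument exactly as in the proof of Proposition~\ref{prop:FaithfulFlatnessNoetherian}. Property~(2)(a) then follows from the flatness of $R \to R_\infty$ (passing to the colimit over finitely generated submodules of a general $M$), and property~(2)(b) follows from combining the faithful flatness of $R \to R_\infty$ with the faithful flatness of $R \to A_0$. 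The \textbf{main obstacle} will be verifying that $A$ is (almost) integral perfectoid in the ramified case: there $A_0 \otimes_R R_\infty$ is only known to be finite over the perfectoid $R_\infty$, so its perfectoidness is not automatic. However, the ramification of $R \hookrightarrow A_0$ is confined to $V(p)$ (the Eisenstein discriminant is a unit times a power of $p$), so $A_0[1/p]$ is finite étale over $R[1/p]$, whence $(A_0 \otimes_R R_\infty)[1/p]$ is finite étale over the perfectoid algebra $R_\infty[1/p]$. The almost purity theorem of Kedlaya--Liu and Scholze then endows $A$ with the structure of an (almost) integral perfectoid $K^\circ$-algebra, completing the construction.
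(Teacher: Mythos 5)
Your reduction to the complete case, your treatment of the unramified case, and your construction of $R_\infty$ are all fine, and match the paper's. The problem is the ramified case, where your argument has a genuine gap.

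You assert that for complete regular local $A_0$ of mixed characteristic with $p \in \mathfrak{m}^2$, a Cohen-type Noether normalization $R = W(k)\llbracket y_1,\dots,y_{d-1}\rrbracket \hookrightarrow A_0$ can be taken to be ``an Eisenstein-type extension along a uniformizer for $p$'' with discriminant a unit times a power of $p$, so that $A_0[\tfrac{1}{p}]$ is finite \'etale over $R[\tfrac{1}{p}]$. This is false. Take $A_0 = W\llbracket x,y\rrbracket/(p-xy)$ with $W = W(\overline{\mathbf{F}}_p)$. This is regular of dimension $2$ with $p = xy \in \mathfrak{m}^2$, but $A_0/p = k\llbracket x,y\rrbracket/(xy)$ has two minimal primes, so $A_0$ cannot be of the form $R[\pi]/(E(\pi))$ with $E$ Eisenstein (the special fibre of such an extension is $k\llbracket \bar{y}_1,\dots\rrbracket[\bar\pi]/(\bar\pi^e)$, which is unibranch). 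Worse, no choice of $s$ with $R=W\llbracket s\rrbracket \hookrightarrow A_0$ finite makes $A_0[\tfrac{1}{p}]$ \'etale over $R[\tfrac{1}{p}]$: the rigid generic fibre of $\mathrm{Spf}(A_0)$ is an open annulus while that of $\mathrm{Spf}(R)$ is an open disc, and connected finite \'etale covers of the open disc over an algebraically closed non-archimedean field are trivial, yet $A_0 \neq R$. Concretely, already for $s = x+y$ one finds the ramification locus contains the height-one prime $(x-y)$, which does not lie over $(p)$; and whenever the residual extension $A_0/p \otimes_{R/p}\mathrm{Frac}(R/p)$ is \'etale, Zariski--Nagata purity forces $A_0 = R$, a contradiction. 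Since you invoke almost purity over $R_\infty$ precisely via this \'etaleness over $R[\tfrac{1}{p}]$, the ramified case of your construction breaks down.

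The paper takes a different route that sidesteps this. It presents $A_0$ as a \emph{quotient} $P_0/(p-f)$ of a power series ring $P_0 = W\llbracket x_1,\dots,x_d\rrbracket$ in $d$ variables (not a finite extension of one in $d-1$ variables), with $f \in (p,x_1,\dots,x_d)^2$ having no constant term and $p\nmid f$. Passing to $A' := \big(\colim_m P_0[x_i^{1/p^m}]/(p-f)\big)^{\wedge}_p$, it observes that the explicit element $g := \sigma^{-1}(f)(x_i^{1/p})$ satisfies $g^p = f + ph$, hence $g^p = p(1+h) = pu$ in $A'$ with $u$ a unit; thus $p$ already acquires a $p$-th root up to a unit purely from adjoining $p$-power roots of the $x_i$ and passing through the relation $p = f$. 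This makes $A'$ integral perfectoid in the generalized (Fontaine/Scholze--Weinstein) sense. Almost purity is then applied only to the evidently \'etale extension of $A'[\tfrac{1}{p}]$ obtained by adjoining $p$-power roots of the unit, producing the final $A$ with $p$ itself admitting $p$-power roots; no \'etaleness of $A_0[\tfrac{1}{p}]$ over a normalization is needed. If you want to salvage a normalization-based argument you would need an extra input of this flavour, since the Noether normalization alone does not control the ramification divisor of $A_0/R$ away from $V(p)$.
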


In the unramified case, we can also arrange for $A_0 \to A$ to be faithfully flat. The proof below shows that it is possible to achieve the same in general provided we make either one of the following modifications: (a) relax (1) above to only requiring either that $A$ is an integral perfectoid ring in a generalized sense (i.e., one that does not necessarily contain a perfectoid field, as elaborated in the proof below), or (b) only require $A$ to be a $p$-adically complete and $p$-torsionfree $K^\circ$-algebra that is almost isomorphic to an integral perfectoid $K^\circ$-algebra. Related constructions occur in \cite[Proposition 4.9]{ShimomotoAlmostPurity} or \cite[Example 3.4.6 (3)]{AndrePAL}.

\begin{proof}
We are free to replace $A_0$ by noetherian regular local rings that are faithfully flat over it. Thus, we may assume that $A_0$ is complete for the topology defined by powers of the maximal ideal, and has an algebraically closed residue field $k$. Let $W = W(k)$ be the Witt vectors of $k$, and write $\mathfrak{m} \subset A_0$ for the maximal ideal. Write $d = \dim(A_0)$.

Assume $p \notin \mathfrak{m}^2$ (which is the so-called {\em unramified} case). Then $p$ is part of a basis of $\mathfrak{m}/\mathfrak{m}^2$, and thus $A_0$ is isomorphic to $W \llbracket x_2,...,x_d \rrbracket$. In this case, we may simply take $A$ to be the $p$-adic completion of $A_0[p^{\frac{1}{p^\infty}},x_i^{\frac{1}{p^\infty}}]$. In this case, the map $A_0 \to A$ is faithfully flat by Proposition~\ref{prop:FaithfulFlatnessNoetherian} (and thus also almost faithfully flat by the argument given at the end of this proof for the ramified case). Note that this case suffices Theorem~\ref{thm:DSCIntro} by \cite[Theorem 6.1]{HochsterCanonical}.

Assume $p \in \mathfrak{m}^2$ (which is the so-called {\em ramified} case). By choosing $d$ generators for $\mathfrak{m}$, we obtain a surjection $\psi:P_0 := W \llbracket x_1,...,x_d \rrbracket \to A_0$. Using the regularity of $A_0$ and the assumption $p \in \mathfrak{m}^2$, it is easy to see that $\ker(\psi)$ is generated by an element of the form $p - f$ where $f = f(x_i) \in (p,x_1,...,x_d)^2$ is a power series. Moreover, as $A_0$ is $p$-torsionfree and $p$-adically complete, we may also conclude that $p \nmid f$ and $f$ has no constant term. Now write $P_m = P_0[x_i^{\frac{1}{p^m}}]$, and consider the ring $A'$ obtained as the $p$-adic completion of $(\colim_m P_m) \otimes_{P_0} A_0 \simeq \colim_m P_m/(p-f)$. As $P_0 \to P_m$ is faithfully flat, it is easy to see that $A_0 \to A'$ is also faithfully flat. Moreover, the element $g = \sigma^{-1}(f)(x_i^{\frac{1}{p}}) \in P_1$ satisfies $g^p = f + ph$ for some $h \in P_1$; here $\sigma$ is the (unique) lift of the Frobenius automorphism of $k$ to $W$, and $\sigma^{-1}(f)$ is the power series obtained by applying $\sigma^{-1}$ to the coefficients of $f$. As $f$ and $g$ have no constant terms, nor does $h$. In $A'$, this gives $g^p = p + ph = p(1+h) = pu$ for some unit $u \in A'$. In particular, the ring $A'$ equipped with the $p$-adic topology is {\em integral perfectoid in a generalized sense}, i.e., the topological ring $A'[\frac{1}{p}]$ (topologized by making $p^n A'$ a neighbourhood basis of $0$) is a perfectoid Tate ring in the sense of \cite[Definition 6.1.1]{ScholzeWeinsteinBerkeley}, and $A'$ is a ring of integral elements in $A'[\frac{1}{p}]$. The proof of \cite[see Lemma 6.2.2]{ScholzeWeinsteinBerkeley} gives an element $\pi \in A'$ admitting a compatible system of $p$-power roots such that $\pi^p = p v$ for some unit $v \in A'$. Henceforth, almost mathematics over $A'$ is measured with respect to $(\pi^{\frac{1}{p^\infty}})$; this also coincides with the ideal$\sqrt{(p)}$, and is thus independent of the choice of $\pi$ or its roots.

Now the theory of perfectoid spaces extends to the generalized setting, see \cite[\S 6]{ScholzeWeinsteinBerkeley}, \cite[\S 16]{GabberRameroFART}, and \cite[\S 3]{KedlayaLiu}. Consider the ind-(finite \'etale) extension $A'[\frac{1}{p}] \to A'[\frac{1}{p}, v^{\frac{1}{p^\infty}}]$ obtained by formally extracting $p$-power roots of $v$ from $A'[\frac{1}{p}]$. By the almost purity theorem  \cite[Theorem 7.9 (iii)]{ScholzePerfectoidSpaces}, the $\pi$-adic completion $A$ of the integral closure of $A'$ in this extension of $A'[\frac{1}{p}]$ is the $\pi$-adic completion of an ind-(almost finite \'etale) extension of $A'$. In particular, $A$ is an integral perfectoid ring in the generalized sense, there is no $\pi$-torsion in $A$, and the map $A' \to A$ is almost faithfully flat modulo $\pi$. By construction, the element $p = \pi^p v^{-1} \in A'$ admits a compatible system of $p$-power roots, so $A'$ can also be viewed as an integral perfectoid algebra over $K^\circ$ for $K = \widehat{\mathbf{Q}_p(p^{\frac{1}{p^\infty}})}$ in the sense used elsewhere in this article (at least after application of $(-)_*$, which is harmless for for our purposes). Note that ideals $(p^{\frac{1}{p^\infty}})$ and $(\pi^{\frac{1}{p^\infty}})$ in $A$ are identical (and both coincide with $\sqrt{(p)}$), so there is a natural notion of almost mathematics over $A$.

It remains to check that the composite map $A_0 \to A$ satisfies (2). This map factorizes as
\[ A_0 \xrightarrow{a} A' \xrightarrow{b} A_{!!} \xrightarrow{c} A\]
where $(-)_{!!}$ is defined as in \cite[Definition 2.2.23]{GabberRamero}. By construction, the map $a$ is faithfully flat and the map $c$ is an injective almost isomorphism. In particular, $A_{!!}$ is $p$-adically complete and $p$-torsionfree as $A$ is so. As the formation of $(-)_{!!}$ commutes with reduction modulo $p^m$ (see \cite[Remark 2.2.28 (ii)]{GabberRamero}), it follows from the almost faithful flatness modulo $p^m$ of $A' \to A$ and \cite[Remark 3.1.3 (ii)]{GabberRamero} that $b$ is faithfully flat modulo $p^m$ for any $m \geq 0$. The composite $b \circ a$ is then faithfully flat by Proposition~\ref{prop:FaithfulFlatnessNoetherian}. As $c$ is an almost isomorphism, this verifies (a) in (2).

For (b) in (2), say $M$ is an $A_0$-module with $M \otimes_{A_0} A$ almost zero. As $c$ is an almost isomorphism, this is equivalent to asking $M \otimes_{A_0} A_{!!}$ is almost zero. We want to show $M = 0$. As $A_0 \to A_{!!}$ is faithfully flat, we may filter $M$ to reduce to the case where $M = A_0/I$ for some ideal $I \subset A_0$. The hypothesis $M \otimes_{A_0} A_{!!}$ being almost zero then translates to $\pi^{\frac{1}{p^n}} \in IA_{!!}$ for all $n \geq 0$. But this implies $p = \pi^p v^{-1} \in I^{p^n} A_{!!}$ for all $n \geq 0$. By faithful flatness of $A_0 \to A_{!!}$, we must have $p \in I^{p^n}$ for all $n \geq 0$. But Krull's intersection theorem implies $\cap_n I^{p^n} = 0$ if $I$ is non-trivial. As $p \neq 0$ on $A_0$, we must therefore have $I = A_0$, and thus $M = 0$ as wanted.
\end{proof}

Finally, we recall a slightly non-standard consequence of the Artin-Rees lemma.

\begin{lemma}
\label{lem:lim1vanishing}
Let $R$ be a noetherian ring equipped with an ideal $I$. For any pair $M,N$ of finitely generated $R$-modules, the pro-$R$-modules $\{\mathrm{Hom}_R(M,N)/I^n\}_{n \geq 1}$ and $\{\mathrm{Hom}_R(M,N/I^nN)\}_{n \geq 1}$ are pro-isomorphic via the natural map. In particular, $\lim^1 \mathrm{Hom}_R(M,N/I^nN) = 0$.
\end{lemma}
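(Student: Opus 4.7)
The plan is to reduce both sides to submodules of finite free modules using a finite presentation of $M$, and then invoke the Artin–Rees lemma twice.

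Concretely, I would first choose a finite presentation $R^a \xrightarrow{\alpha} R^b \to M \to 0$ of $M$ (using that $R$ is noetherian and $M$ is finitely generated). Applying $\mathrm{Hom}_R(-,N)$ gives a left-exact sequence $0 \to K \to N^b \xrightarrow{\alpha^*} N^a$ with $K = \mathrm{Hom}_R(M,N)$, while applying $\mathrm{Hom}_R(-,N/I^nN)$ identifies $\mathrm{Hom}_R(M,N/I^nN)$ with $L_n / I^n N^b$, where $L_n := (\alpha^*)^{-1}(I^n N^a) \subset N^b$ is the preimage of $I^n N^a$. Under these identifications, the natural map of interest becomes $K/I^n K \to L_n/I^n N^b$, induced by the inclusion $K \subset L_n$.

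The kernel of this map is $(K \cap I^n N^b)/I^n K$. Applying the Artin–Rees lemma to the submodule $K \subset N^b$ (both finitely generated over the noetherian ring $R$) yields an integer $k$ with $K \cap I^{n+k} N^b \subset I^n K$ for all $n$, so the transition $(K \cap I^{n+k} N^b)/I^{n+k} K \to (K \cap I^n N^b)/I^n K$ is zero. Hence the kernel pro-system is pro-zero. For the cokernel $L_n/(K + I^n N^b)$, projection via $\alpha^*$ identifies it with $(P \cap I^n N^a)/I^n P$, where $P := \alpha^*(N^b) \subset N^a$. A second application of Artin–Rees, now to $P \subset N^a$, gives some $k'$ with $P \cap I^{n+k'} N^a \subset I^n P$, which makes the transition map on the cokernel system zero. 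Thus the cokernel pro-system is also pro-zero, which proves the pro-isomorphism.

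For the final statement, the system $\{\mathrm{Hom}_R(M,N)/I^n\}$ has surjective transition maps, so its $\mathrm{lim}^1$ vanishes trivially. Since $R\lim$ depends only on the pro-object (pro-isomorphic systems have canonically isomorphic $R\lim$), the same holds for $\{\mathrm{Hom}_R(M,N/I^nN)\}$, giving $\lim^1 \mathrm{Hom}_R(M,N/I^nN) = 0$. The only substantive input is Artin–Rees; the main point to be careful about is the cokernel computation, i.e.\ verifying that $L_n/(K + I^n N^b) \cong (P \cap I^n N^a)/I^n P$ via $\alpha^*$, but this is immediate once one notes that $\alpha^*$ induces an isomorphism $N^b/(K + I^n N^b) \cong P/I^n P$ and carries $L_n$ onto $P \cap I^n N^a$.
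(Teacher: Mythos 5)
Your proof is correct, and it is essentially the paper's argument: both start from a finite presentation of $M$ and reduce the comparison to two applications of Artin–Rees. The paper packages Artin–Rees as the statement that $P \mapsto \{P/I^n\}$ is exact on finitely generated $R$-modules and then runs a three-term diagram chase in the pro-category, which handles the kernel and cokernel systems simultaneously; you instead compute those systems explicitly (as $(K \cap I^nN^b)/I^nK$ and $(P \cap I^nN^a)/I^nP$) and apply Artin–Rees to each of the submodules $K \subset N^b$ and $P \subset N^a$ directly. Your cokernel identification via $\alpha^*$ is correct, and the concluding remark that $R\lim$ is a pro-isomorphism invariant (together with Mittag-Leffler for $\{\mathrm{Hom}_R(M,N)/I^n\}$) matches the paper's intent for the ``in particular'' clause.
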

\begin{proof}
We shall use the Artin-Rees lemma in the following form: the functor $P \mapsto \{P/I^n\}$ is an exact functor from finitely generated $R$-modules $P$ to pro-$R$-modules. To apply this, pick a presentation 
\[ F_1 \to F_0 \to M \to 0\]
with $F_i$ being finite free. Applying $\mathrm{Hom}_R(-,N)$ gives an exact sequence
\[ 0 \to \mathrm{Hom}_R(M,N) \to \mathrm{Hom}_R(F_0,N) \to \mathrm{Hom}_R(F_1,N).\]
The previously mentioned form of the Artin-Rees lemma then yields an exact sequence of pro-$R$-modules of the form
\[ 0 \to \{\mathrm{Hom}_R(M,N)/I^n\}_{n \geq 1} \to \{\mathrm{Hom}_R(F_0,N)/I^n\}_{n \geq 1} \to \{\mathrm{Hom}_R(F_1,N)/I^n\}_{n \geq 1}.\]
Repeating this analysis using the functor $\mathrm{Hom}_R(-,N/I^nN)$ instead gives an exact sequence of pro-$R$-modules
\[ 0 \to \{\mathrm{Hom}_R(M,N/I^nN)\}_{n \geq 1} \to \{\mathrm{Hom}_R(F_0,N/I^nN)\}_{n \geq 1} \to \{\mathrm{Hom}_R(F_1,N/I^nN)\}_{n \geq 1}.\]
Comparing the sequences yields the lemma as $\mathrm{Hom}_R(F_i,N/I^nN) \simeq \mathrm{Hom}_R(F_i,N)/I^n$ since $F_i$ is finite free.
\end{proof}

We can now prove the promised theorem.

\begin{theorem}
\label{thm:DSC}
Let $A_0$ be a regular ring. The map $A_0 \to B_0$ of $A_0$-modules is split.
\end{theorem}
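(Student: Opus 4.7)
The plan is to show that the extension class $\alpha \in \mathrm{Ext}^1_{A_0}(Q_0, A_0)$ of $0 \to A_0 \to B_0 \to Q_0 := B_0/A_0 \to 0$ vanishes. After standard reductions (cf.\ \cite{HochsterCanonical, AndreDSC}), I would assume that $A_0$ is a complete Noetherian regular local ring of mixed characteristic $(0,p)$ with $B_0$ finitely generated and $A_0$-torsion-free, and that there is a nonzero $g \in A_0$ with $p \mid g$ such that $A_0[\tfrac{1}{g}] \to B_0[\tfrac{1}{g}]$ is finite \'etale. By Lemma~\ref{lem:lim1vanishing} together with $p$-adic completeness, $\mathrm{Ext}^1_{A_0}(Q_0, A_0) = \lim_m \mathrm{Ext}^1_{A_0}(Q_0, A_0/p^m)$, reducing us to showing that each image $\alpha_m \in \mathrm{Ext}^1_{A_0}(Q_0, A_0/p^m)$ vanishes.

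Next I would build a perfectoid cover via Proposition~\ref{prop:RegularPerfectoidFaithfullyFlat} and Theorem~\ref{thm:AdjoinRootsDisc}, producing an almost faithfully flat tower $A_0 \to A \to A_\infty$ of $K^\circ$-algebras (with $t = p$) in which $g$ acquires compatible $p$-power roots $g^{1/p^k} \in A_\infty$. For $n \geq 1$, the rational-subset perfectoid algebra $A_n := A_\infty \langle \tfrac{t^n}{g} \rangle$ satisfies $g \mid t^n$, so $A_n[\tfrac{1}{t}] \to B_0 \otimes_{A_0} A_n[\tfrac{1}{t}]$ is finite \'etale. The almost purity theorem of Scholze--Kedlaya--Liu then implies that $A_n \to B_0 \otimes_{A_0} A_n$ is almost finite \'etale with respect to $t^{1/p^\infty}$, so the image of $\alpha_m$ in $\mathrm{Ext}^1_{A_n}(Q_0 \otimes_{A_0} A_n, A_n/t^m)$ is $t^{1/p^\infty}$-almost zero, uniformly in $n$.

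To pass to the limit in $n$, I would invoke the quantitative Hebbarkeitssatz (Theorem~\ref{thm:QuantRET}): the pro-system map $\{A_\infty/t^m\}_n \to \{A_n/t^m\}_n$ is an almost-pro-isomorphism with respect to $(tg)^{1/p^\infty}$. Combining Lemma~\ref{lem:PAIAlinear} (applied to the $A_\infty$-linear functor $\mathrm{Ext}^i_{A_\infty}(Q_0 \otimes_{A_0} A_\infty, -)$ for $i = 0, 1$) with Lemma~\ref{lem:PAIAlmostIso} and the long exact Ext sequence shows that the image of $\alpha_m$ in $\mathrm{Ext}^1_{A_\infty}(Q_0 \otimes_{A_0} A_\infty, A_\infty/t^m)$ is $(tg)^{1/p^\infty}$-almost zero. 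Since $A_0/p^m \to A_\infty/t^m$ is almost flat (by a $t$-adic d\'evissage from Proposition~\ref{prop:RegularPerfectoidFaithfullyFlat} and Theorem~\ref{thm:AdjoinRootsDisc}) and $Q_0$ is finitely presented, flat base change identifies this with $\alpha_m \otimes_{A_0} A_\infty$ being $(tg)^{1/p^\infty}$-almost zero.

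The last step is the descent to $A_0$, which I expect to be the main obstacle because of the switch between almost structures. Let $I := \mathrm{Ann}_{A_0}(\alpha_m)$; the goal is $I = A_0$. Unwinding the almost vanishing yields $(tg)^{1/p^k} \in I \cdot A_\infty$ for every $k$, and raising to $p^k$-th powers yields $tg \in I^{p^k} \cdot A_\infty$. Because $p^m \in I$, the cyclic $A_0$-submodule of $A_0/I^{p^k}$ generated by the image of $tg$ is of bounded $p$-power torsion and vanishes after tensoring with $A_\infty$; combining mod-$t^M$ almost faithful flatness of $A \to A_\infty$ with Proposition~\ref{prop:RegularPerfectoidFaithfullyFlat}(2)(b) then forces $tg \in I^{p^k}$ in $A_0$ for every $k$. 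If $I$ were proper (hence contained in the maximal ideal), Krull's intersection theorem in the Noetherian local ring $A_0$ would give $\bigcap_k I^{p^k} = 0$, contradicting $tg = pg \neq 0$ in the domain $A_0$. Hence $I = A_0$ and $\alpha_m = 0$, completing the proof. The key subtlety is that almost purity only delivers $t^{1/p^\infty}$-vanishing over the $A_n$ while Theorem~\ref{thm:QuantRET} forces the weaker $(tg)^{1/p^\infty}$-vanishing over $A_\infty$; the Krull-intersection argument is precisely the device that converts this weaker almost vanishing back to exact vanishing in the Noetherian ring $A_0$.
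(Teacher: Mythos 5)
Your proposal follows essentially the same route as the paper: reduce to a regular complete local ring of mixed characteristic, pass to $A_0 \to A_{\infty,0} \to A_\infty$ via Proposition~\ref{prop:RegularPerfectoidFaithfullyFlat} and Theorem~\ref{thm:AdjoinRootsDisc}, reduce mod $p^m$ using Lemma~\ref{lem:lim1vanishing}, use almost purity over each $A_\infty\langle p^n/g\rangle$ together with Theorem~\ref{thm:QuantRET} to obtain $(pg)^{1/p^\infty}$-almost vanishing of $\alpha_\infty/p^m$, and then descend by Krull's intersection theorem. The key lemmas invoked are identical, and the only genuine divergence is in the bookkeeping of the final descent: you run Krull's intersection on $I = \mathrm{Ann}_{A_0}(\alpha_m)$ inside the domain $A_0$, whereas the paper runs it on $\mathrm{Ann}_{A_0/p^m}(\alpha_0/p^m)$ inside $A_0/p^m$, choosing $g$ coprime to $p$, taking $m\ge3$, and tracking a lost power of $p$ explicitly.

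The weak spot is the sentence ``Unwinding the almost vanishing yields $(tg)^{1/p^k}\in I\cdot A_\infty$.'' What the almost vanishing gives is $(tg)^{1/p^k}\cdot\alpha_\infty=0$, i.e.\ $(tg)^{1/p^k}\in\mathrm{Ann}_{A_\infty}(\alpha_\infty/p^m)$. To convert this into membership in $I\cdot A_\infty$ you must know the natural map $A_\infty/IA_\infty\to\mathrm{Hom}_{A_\infty}(Q_\infty,A_\infty/p^m[1])$, $1\mapsto\alpha_\infty$, is (almost) injective, which requires both the flat base change for the perfect complex $Q_0$ and the injectivity of $(A_0/I)\otimes_{A_0}A_\infty\to\mathrm{Hom}_{A_0}(Q_0,A_0/p^m[1])\otimes_{A_0}A_\infty$ — neither of which is literal since $A_0/p^m\to A_\infty/p^m$ is only \emph{almost} flat. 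The conclusion $tg\in I^{p^k}A_\infty$ therefore holds only almostly, and your subsequent appeal to Proposition~\ref{prop:RegularPerfectoidFaithfullyFlat}(2)(b) must be rerun with ``almost in'' in place of ``in'', which it does accommodate (the cyclic module $(A_0\cdot\overline{tg}+I^{p^k})/I^{p^k}$ base-changes to an almost-zero $A_\infty$-module and hence vanishes). So the argument survives, but it needs exactly the same kind of power-of-$p$ accounting that the paper makes explicit by replacing $pg$ with $p^2g$ and insisting $m\ge 3$; as written, that step is glossed over. Your choice of $g$ with $p\mid g$ rather than $p\nmid g$ is cosmetic and does not affect the logic.
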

\begin{proof}
We may assume that $A_0$ is a noetherian regular local ring of mixed characteristic $(0,p)$. Choose $g \in A_0$ coprime to $p$ such that $A_0 \to B_0$ is finite \'etale after inverting $pg$. Let $A_0 \to A_{\infty,0}$ be the extension provided by Proposition~\ref{prop:RegularPerfectoidFaithfullyFlat}, and let $A_{\infty,0} \to A_\infty$ be the extension resulting from applying Theorem~\ref{thm:AdjoinRootsDisc} to the integral perfectoid ring $A_{\infty,0}$ equipped with the element $g$.

Consider the canonical exact triangle 
\[ A_0 \to B_0 \to Q_0\]
of $A_0$-modules. The boundary map $\alpha_0 \in \mathrm{Hom}_{A_0}(Q_0, A_0[1])$ is the obstruction to this sequence being split. We would like this show this obstruction vanishes. We change subscript to denote derived base change to either $A_{\infty,0}$ or $A_{\infty}$; for example, $Q_{\infty,0} := Q_0 \otimes_{A_0}^L A_{\infty,0}$, $\alpha_\infty := \alpha_0 \otimes_{A_0}^L A_\infty$, etc.

First, it suffices to show that $\alpha_0/p^m \in \mathrm{Hom}_{A_0}(Q_0, A_0/p^m[1])$ vanishes for all $m \gg 0$. Indeed, we have 
\[ \mathrm{Hom}_{A_0}(Q_0, A_0[1]) \simeq \lim_m \mathrm{Hom}_{A_0}(Q_0, A_0/p^m[1]),\]
as $A_0$ is $p$-adically complete and $\{\mathrm{Hom}_{A_0}(Q_0, A_0/p^m)\}_{m \geq 1}$ has vanishing $\lim^1$ by Lemma~\ref{lem:lim1vanishing}.

Choose  $m \geq 3$ such that $\alpha_0/p^m \neq 0$, so $\mathrm{Ann}_{A_0/p^m}(\alpha_0/p^m) \neq A_0/p^m$; if no such $m$ exists, then $\alpha_0/p^m = 0$ for all $m$, so we are done. Otherwise, by Krull's theorem, there exists $k \geq 0$ such that $p^2 g \notin \big(\mathrm{Ann}_{A_0/p^m}(\alpha_0/p^m)\big)^{p^k}$; here we use that $m \geq 3$ and that $0 \neq g \in A_0/p$. As both $A_0/p^m \to A_{\infty,0}/p^m$ and $A_{\infty,0}/p^m \to A_\infty/p^m$ are almost faithfully flat with respect to $p^{\frac{1}{p^\infty}}$, we get $pg \notin \big(\mathrm{Ann}_{A_\infty/p^m}(\alpha_\infty/p^m)\big)^{p^k}$, so $(pg)^{\frac{1}{p^k}} \notin \mathrm{Ann}_{A_\infty/p^m}(\alpha_\infty/p^m)$; here we lose a power of $p$ in passing to almost mathematics. It is thus enough (via contradiction) to show that $\alpha_\infty/p^m \in \mathrm{Hom}_{A_\infty}(Q_\infty, A_\infty/p^m[1])$ is almost zero with respect to $(pg)^{\frac{1}{p^\infty}}$. 

Consider the tower $\{A_\infty \langle \frac{p^n}{g} \rangle\}$ from \S \ref{sec:QuantRET}.  As $g$ divides $p^n$ in $A_\infty \langle \frac{p^n}{g} \rangle$, the base change $A_\infty \langle \frac{p^n}{g} \rangle \to B_0 \otimes^L_{A_0} A_\infty \langle \frac{p^n}{g} \rangle$ of $A_0 \to B_0$ is finite \'etale after inverting $p$. Almost purity \cite[Theorem 7.9 (iii)]{ScholzePerfectoidSpaces} then implies that this base change can be dominated by an almost finite \'etale cover of $A_\infty \langle \frac{p^n}{g} \rangle$, and is thus almost split with respect to $p^{\frac{1}{p^\infty}}$ (see \cite[Lemma 2.7]{BhattAlmostDSC}). The same then holds modulo $p^m$, so the image of $\alpha_\infty/p^m$ under 
\[ \mathrm{can}:\mathrm{Hom}_{A_\infty}(Q_\infty, A_\infty/p^m[1]) \to \lim_n \mathrm{Hom}_{A_\infty}(Q_\infty, A_\infty \langle \frac{p^n}{g} \rangle/p^m[1])\]
is almost zero with respect to $p^{\frac{1}{p^\infty}}$. It is now enough  to show that the above map is an almost isomorphism with respect to $(pg)^{\frac{1}{p^\infty}}$. By Theorem~\ref{thm:QuantRET}, the only obstruction is $\lim^1$ of $\{\mathrm{Hom}_{A_\infty}(Q_\infty, A_\infty \langle \frac{p^n}{g} \rangle/p^m)\}_{n \geq 1}$. This pro-system is almost-pro-isomorphic to a constant pro-system by Lemma~\ref{lem:PAIAlinear} and Theorem~\ref{thm:QuantRET}, so we are done by Lemma~\ref{lem:PAIAlmostIso}.
\end{proof}

\begin{remark}
The proof given above goes through for any noetherian ring $A_0$ that admits a faithfully flat extension which is integral perfectoid. Thus, one may ask: does this condition characterize regularity? In other words, is there a $p$-adic analog of Kunz's theorem characterizing regularity in characteristic $p$ as the flatness of Frobenius? A positive answer to this question will appear in forthcoming work of the author with Iyengar and Ma.
\end{remark}

\section{The derived direct summand conjecture}
\label{sec:DDSC}

The goal of this section is to prove Theorem~\ref{thm:DDSCIntro}

\begin{theorem}
\label{thm:DDSC}
Let $A_0$ be a regular noetherian ring, and let $f_0:X_0 \to \mathrm{Spec}(A_0)$ be a proper surjective map. Then the map $A_0 \to R\Gamma(X_0, \mathcal{O}_{X_0})$ splits in $D(A_0)$.
\end{theorem}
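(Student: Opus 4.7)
The plan is to adapt the proof of Theorem~\ref{thm:DSC} to the derived setting, replacing ``almost splitting via almost purity'' by Scholze's vanishing theorem for proper morphisms over perfectoid bases. After the equal characteristic cases of Theorem~\ref{thm:DDSC} (which are already known), I would reduce to the case where $A_0$ is a $p$-torsionfree noetherian regular local ring of mixed characteristic $(0,p)$. Replacing $X_0$ by a closed subscheme $Z_0 \subset X_0$ that is proper and generically finite over $\mathrm{Spec}(A_0)$ (via Chow's lemma together with a generic hyperplane section argument), and noting that the restriction $R\Gamma(X_0, \mathcal{O}_{X_0}) \to R\Gamma(Z_0, \mathcal{O}_{Z_0})$ converts any splitting for $Z_0$ into one for $X_0$, I further reduce to a proper generically finite $g_0 : Z_0 \to \mathrm{Spec}(A_0)$; generic \'etaleness in characteristic zero then lets me assume $g_0$ becomes finite \'etale after inverting some $g \in A_0$ coprime to $p$.

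Set $Q_0 := \mathrm{cofib}(A_0 \to R\Gamma(Z_0, \mathcal{O}_{Z_0})) \in D(A_0)$ and let $\alpha_0 \in \mathrm{Hom}_{D(A_0)}(Q_0, A_0[1])$ be the obstruction class. Since $g_0$ is proper, $Q_0$ is pseudo-coherent, so a mild enhancement of Lemma~\ref{lem:lim1vanishing} applied cohomology-sheaf-by-cohomology-sheaf identifies $\mathrm{Ext}^1_{A_0}(Q_0, A_0)$ with $\lim_m \mathrm{Ext}^1_{A_0}(Q_0, A_0/p^m)$ (with vanishing $\lim^1$), reducing the problem to $\alpha_0/p^m = 0$ for each $m$. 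Following the strategy of Theorem~\ref{thm:DSC} verbatim, I compose the extensions from Proposition~\ref{prop:RegularPerfectoidFaithfullyFlat} and Theorem~\ref{thm:AdjoinRootsDisc} (applied to $g$) to produce $A_0 \to A_{\infty,0} \to A_\infty$, and use Krull's intersection theorem together with the almost faithful flatness of this composite to reduce to showing that $\alpha_\infty/p^m \in \mathrm{Ext}^1_{A_\infty}(Q_\infty, A_\infty/p^m)$ is almost zero with respect to $(pg)^{1/p^\infty}$.

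Now I consider the tower $\{A_\infty\langle p^n/g\rangle\}_{n \geq 1}$ from \S\ref{sec:QuantRET}. Since $g \mid p^n$ in $A_\infty\langle p^n/g\rangle$, the derived base change $Z_\infty^{(n)}$ is proper with finite \'etale generic fibre. The essentially new analytic input is Scholze's vanishing theorem: for an integral perfectoid $K^\circ$-algebra $R$ and a proper $R$-scheme $Y$ with $Y[1/p] \to \mathrm{Spec}(R[1/p])$ finite \'etale, the map $R \to R\Gamma(Y, \mathcal{O}_Y)$ is an almost isomorphism onto an almost finite \'etale $R$-algebra (with respect to $p^{1/p^\infty}$). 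Applied with $R = A_\infty\langle p^n/g\rangle$, this forces the image of $\alpha_\infty/p^m$ in $\mathrm{Ext}^1_{A_\infty\langle p^n/g\rangle}(Q_\infty^{(n)}, A_\infty\langle p^n/g\rangle/p^m)$ to be almost zero with respect to $p^{1/p^\infty}$ for every $n$. To descend this back to $A_\infty$, I apply Theorem~\ref{thm:QuantRET} in combination with Lemmas~\ref{lem:PAIAlinear} and \ref{lem:PAIAlmostIso} to the $A_\infty/p^m$-linear functor $R\mathrm{Hom}_{A_\infty/p^m}(Q_\infty/p^m, -)$, obtaining an almost isomorphism
\[
\mathrm{Ext}^1_{A_\infty}(Q_\infty, A_\infty/p^m) \stackrel{a}{\simeq} \lim_n \mathrm{Ext}^1_{A_\infty}\bigl(Q_\infty, A_\infty\langle p^n/g\rangle/p^m\bigr)
\]
with respect to $(pg)^{1/p^\infty}$; the potentially obstructing $\lim^1$ is killed by the almost-pro-zeroness of the cokernels in Theorem~\ref{thm:QuantRET}. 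This yields the required almost vanishing of $\alpha_\infty/p^m$.

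The step I expect to be the main obstacle is the invocation of Scholze's vanishing theorem. In Theorem~\ref{thm:DSC} the target $B_0$ is concentrated in degree zero and almost purity alone suffices to produce the almost splitting; here the target is the entire derived cohomology complex of a proper map, so one must control its higher cohomology almost-integrally rather than merely after inverting $p$. Once this cohomological vanishing is available, the rest of the argument is a near-verbatim port of the linear-algebra machinery developed in \S\ref{sec:QuantRET} and \S\ref{sec:DSC}.
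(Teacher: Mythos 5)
Your overall architecture matches the paper's: reduce to a proper generically finite map, pass through $A_{\infty,0} \to A_\infty$, go up the tower $\{A_\infty\langle p^n/g\rangle\}$ to push ramification into characteristic $p$, split there, and descend using the quantitative Riemann extension machinery from \S\ref{sec:QuantRET} and \S\ref{sec:ProAlmostZero}. The reductions and the limiting argument are correct in outline.

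The genuine gap is in the step you yourself flag as the main obstacle, and the problem is that your statement of the key input is wrong. You assert that ``Scholze's vanishing theorem'' says: for an integral perfectoid $R$ and a proper $R$-scheme $Y$ with finite \'etale generic fiber, $R \to R\Gamma(Y,\mathcal{O}_Y)$ is an almost isomorphism onto an almost finite \'etale $R$-algebra. This is not Scholze's theorem, and it is not what is true or needed. Scholze's vanishing theorem \cite[Proposition 6.14]{ScholzePerfectoidSpaces} concerns the affinoid perfectoid adic space $S_\eta = \mathrm{Spa}(A[\frac{1}{p}], A)$: it says $H^i(S_\eta, \mathcal{O}^+_{S_\eta})$ is almost zero for $i > 0$. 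It says nothing directly about $R\Gamma(Y, \mathcal{O}_Y)$ for a proper scheme $Y$ over $\mathrm{Spec}(A)$; a priori $R^i f_*\mathcal{O}_Y$ can be large, since $\mathrm{Spec}(A)$ is highly non-noetherian and non-regular. What is actually true (Proposition~\ref{prop:DDSCAdmissibleAlteration} in the paper) is only that $A \to R\Gamma(Y, \mathcal{O}_Y)$ is \emph{almost split}, and proving this is a nontrivial composite argument: (i) apply almost purity to $H^0(Y,\mathcal{O}_Y)$ to reduce to the case where $f[\frac{1}{p}]$ is an isomorphism; (ii) observe that the $p$-adic completion of $Y$ is then dominated by an admissible blowup of $\widehat{\mathrm{Spec}(A)}$; (iii) use the valuative criterion of properness to factor the map of locally ringed spaces $(S_\eta, \mathcal{O}^+_{S_\eta}) \to \mathrm{Spec}(A)$ through $Y$, and hence $A \to R\Gamma(Y,\mathcal{O}_Y) \to R\Gamma(S_\eta, \mathcal{O}^+_{S_\eta})$; (iv) invoke the actual vanishing theorem to see that the composite is an almost isomorphism, which makes the first map almost split. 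Step (iii) is the essentially new geometric ingredient relative to the DSC argument, and your proposal does not contain it. Once the correct (almost splitting, not almost isomorphism) statement is in place, the rest of your limiting argument over the tower runs exactly as you describe; the almost splitting of each $A_\infty\langle p^n/g\rangle/p^m \to R\Gamma(X_0,\mathcal{O}_{X_0}) \otimes^L_{A_0} A_\infty\langle p^n/g\rangle/p^m$ is all that is needed to kill the image of $\alpha_\infty/p^m$ in each term of the projective system.
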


\begin{proof}
We may assume $A_0$ is a regular local ring. By taking the closure of a suitable generically defined multisection, we may assume that $X_0$ is integral and $f_0$ is generically finite. Then we can choose $g \in A_0$ coprime to $p$ such that $f_0$ is finite \'etale after inverting $pg$. Construct $A_{\infty,0}$ and $A_\infty$ as Theorem~\ref{thm:DSC}. Repeating the argument in the proof of Theorem~\ref{thm:DSC}, we must show that for fixed $m,n \geq 1$, the map 
\[ A_\infty \langle \frac{p^n}{g} \rangle/p^m \to R\Gamma(X_0,\mathcal{O}_{X_0}) \otimes^L_{A_0} A_\infty \langle \frac{p^n}{g} \rangle/p^m\]
is almost split with respect to $p^{\frac{1}{p^\infty}}$. As the base change $X_0 \times_{\mathrm{Spec}(A_0)} \mathrm{Spec}(A_\infty \langle \frac{p^n}{g} \rangle) \to \mathrm{Spec}(A_\infty \langle \frac{p^n}{g} \rangle)$ is proper and finite \'etale after inverting $p$ (as $g$ divides $p^n$ on the base), Proposition~\ref{prop:DDSCAdmissibleAlteration} and a diagram chase finish the proof.
 \end{proof}

The following special case of Theorem~\ref{thm:DDSC} is the crucial one:

\begin{proposition}
\label{prop:DDSCAdmissibleAlteration}
Let $A$ be an integral perfectoid $K^\circ$-algebra, and set $S = \mathrm{Spec}(A)$. Let $f:Y \to S$ be a proper morphism such that $f[\frac{1}{p}]$ is finite \'etale. Then $A \to R\Gamma(Y,\mathcal{O}_Y)$ is almost split.
\end{proposition}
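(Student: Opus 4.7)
The plan is to reduce the question to splitting the map $A \to B$ for an integral perfectoid $A$-algebra $B$ that is almost finite étale over $A$; the latter is then a standard almost-mathematics exercise.

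First, since $A$ is $t$-adically complete, the functor $\mathrm{RHom}_A(-, A)$ depends only on the derived $t$-adic completion of its argument, so it suffices to split $A \to R\Gamma(\hat{Y}, \mathcal{O}_{\hat{Y}})$, where $\hat{Y}$ is the $t$-adic formal completion of $Y$. Because $f$ is proper and $f[\frac{1}{t}]$ is finite étale, $Y[\frac{1}{t}] \to S[\frac{1}{t}]$ is simultaneously proper and finite étale, hence affine: $Y[\frac{1}{t}] = \mathrm{Spec}(B_\eta)$ for some finite étale $A[\frac{1}{t}]$-algebra $B_\eta$. The adic generic fiber $\mathcal{Y}$ of $\hat{Y}$ is therefore finite étale over the affinoid perfectoid space $X := \mathrm{Spa}(A[\frac{1}{t}], A)$. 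By almost purity (\cite[Theorem 7.9]{ScholzePerfectoidSpaces}), $\mathcal{Y} = \mathrm{Spa}(B_\eta, B)$ is itself affinoid perfectoid, where $B$ is an integral perfectoid $K^\circ$-algebra that is almost finite étale over $A$.

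The main step is to identify $R\Gamma(\hat{Y}, \mathcal{O}_{\hat{Y}})$ with $B$ in the almost sense, compatibly with the structure maps from $A$. For each $m \geq 1$, a form of Scholze's comparison / vanishing theorem (as in \cite{ScholzeTorsion}) identifies $R\Gamma(\hat{Y}, \mathcal{O}_{\hat{Y}})/t^m$ almost with the adic cohomology $R\Gamma(\mathcal{Y}, \mathcal{O}^+_{\mathcal{Y}}/t^m)$. Since $\mathcal{Y}$ is affinoid perfectoid, the higher cohomology of $\mathcal{O}^+_{\mathcal{Y}}$ on $\mathcal{Y}$ is almost zero, so this target is almost isomorphic to $B/t^m$. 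Taking $R\lim$ over $m$ and using $t$-adic completeness of $A$ and $B$, we obtain an almost isomorphism $R\Gamma(\hat{Y}, \mathcal{O}_{\hat{Y}}) \stackrel{a}{\simeq} B$ compatible with the structure maps from $A$.

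Finally, since $A \to B$ is almost finite étale, it admits an almost $A$-module retraction (as in the proof of Theorem~\ref{thm:DSC}, via \cite[Lemma 2.7]{BhattAlmostDSC}). Combined with the almost isomorphism of the previous paragraph, this yields the desired almost splitting of $A \to R\Gamma(Y, \mathcal{O}_Y)$. The key obstacle is the comparison step: matching formal-scheme cohomology modulo $t^m$ with the adic $\mathcal{O}^+$-cohomology of the generic fiber; this is the perfectoid analog of the classical fact that functions on a proper scheme with affine generic fiber are controlled by the generic fiber, and is where the analytic input from perfectoid geometry enters.
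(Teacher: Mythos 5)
Your proof follows the same broad strategy as the paper's: use almost purity to get an almost finite \'etale perfectoid algebra $B$ over $A$, use Scholze's vanishing theorem for $\mathcal{O}^+$ on affinoid perfectoid spaces, and transfer the resulting almost splitting back to $R\Gamma(Y,\mathcal{O}_Y)$. But the step you yourself flag as ``the key obstacle'' contains a genuine gap. You assert an almost isomorphism
\[
R\Gamma(\hat{Y},\mathcal{O}_{\hat{Y}})/t^m \stackrel{a}{\simeq} R\Gamma(\mathcal{Y},\mathcal{O}^+_{\mathcal{Y}}/t^m)
\]
and attribute it to ``a form of Scholze's comparison / vanishing theorem'' in \cite{ScholzeTorsion}; no such theorem is available there for an \emph{arbitrary} (in particular, singular and non-flat) proper formal scheme $\hat{Y}$. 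The comparison between formal-model coherent cohomology and $\mathcal{O}^+$-cohomology of the adic generic fiber is known in general only up to \emph{bounded} $t$-power torsion, and bounded torsion is not almost-zero torsion (a module killed by $t$ need not be killed by $t^{1/p}$). Indeed the almost isomorphism should not be expected: the left side depends on the chosen formal model $\hat{Y}$ (it changes under admissible blowups), while the right side is intrinsic to $\mathcal{Y}$. So as written, the main step is not justified.

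The good news is that your argument does not actually need an isomorphism --- it only needs a \emph{map} $R\Gamma(Y,\mathcal{O}_Y) \to R\Gamma(\mathcal{Y},\mathcal{O}^+_{\mathcal{Y}})$ compatible with the structure maps from $A$. Given such a map, composing with the almost isomorphism $R\Gamma(\mathcal{Y},\mathcal{O}^+_{\mathcal{Y}}) \stackrel{a}{\simeq} B$ from \cite[Proposition 6.14]{ScholzePerfectoidSpaces} and with an almost retraction $B \to A$ (from almost purity) produces an almost retraction of $A \to R\Gamma(Y,\mathcal{O}_Y)$, with no isomorphism anywhere. This is exactly how the paper's proof goes: it first uses almost purity to replace $A$ by the integral perfectoid closure $C$ of $B = H^0(Y,\mathcal{O}_Y)$, reducing to the case where $f[\frac{1}{p}]$ is an isomorphism; then $\hat{f}$ is dominated by an admissible blowup, so the valuative criterion produces a map $(S_\eta,\mathcal{O}^+_{S_\eta}) \to Y$, and taking cohomology gives $A \xrightarrow{b} R\Gamma(Y,\mathcal{O}_Y) \xrightarrow{a} R\Gamma(S_\eta,\mathcal{O}^+_{S_\eta})$ with $a\circ b$ an almost isomorphism (so $b$ is almost split). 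Your version, which implicitly uses the specialization morphism $(\mathcal{Y},\mathcal{O}^+_{\mathcal{Y}}) \to \hat{Y}$ directly, is morally the same; to make it rigorous you should (i) state the ``main step'' as the existence of a compatible map rather than an isomorphism, and (ii) justify that map (e.g.\ via the specialization morphism, or via the paper's admissible-blowup argument) instead of invoking an unproved comparison theorem.
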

\begin{proof}
Let $B = H^0(Y, \mathcal{O}_Y)$, so $B$ is an integral extension of $A$ which is finite \'etale after inverting $p$, and $Y$ is naturally a $B$-scheme. Almost purity \cite[Theorem 7.9 (iii)]{ScholzePerfectoidSpaces} gives a map $B \to C$ which is an isomorphism after inverting $p$ such that the induced map $A \to C$ is an almost finite \'etale cover. In particular, $C$ is integral perfectoid, and $A \to C$ is almost split. Thus, on replacing $A$ with $C$ and $Y$ with $Y \otimes_B C$, we may assume that $f[\frac{1}{p}]$ is an isomorphism. But then the $p$-adic completion $\widehat{f}$ of $f$ can be dominated by an admissible blowup of $S$. Set $S_\eta  = \mathrm{Spa}(A[\frac{1}{p}])$ to be the associated affinoid perfectoid space, so the natural map $(S_\eta, \mathcal{O}_{S_\eta}^+) \to S$ factors\footnote{This assertion is implicit in the description of adic spaces given in \cite[\S 15.4]{GabberRameroFART}, and can be explained roughly as follows. It is enough to define the map when $Y$ is the blowup of $S$ at an ideal $I = (f_1,...,f_n,g)$ with $f_n = p^N$. The map $S_\eta \to Y$ on topological spaces is defined immediately via the valuative criterion of properness. The preimage of the chart $Y_g \subset Y$ where $g \mid f_i$ is the rational subset $S_\eta\Big(\frac{f_1,...,f_n}{g}\Big) \subset S_\eta$. Moreover, the sections of $\mathcal{O}_{S_\eta}^+$ on $S_\eta\Big(\frac{f_1,...,f_n}{g}\Big)$ are a suitable completion of the integral closure in $A[\frac{1}{g}]$ of $H^0(Y_g,\mathcal{O}_Y)$, which gives the map on sheaves.}  through every admissible blowup of $\widehat{S}$. In particular, it factors as
\[ (S_\eta, \mathcal{O}^+_{S_\eta}) \to Y \to S.\]
Taking cohomology of the structure sheaf gives 
\[ A \xrightarrow{b} R\Gamma(Y, \mathcal{O}_Y) \xrightarrow{a} R\Gamma(S_\eta, \mathcal{O}^+_{S_\eta}).\]
Now $a \circ b$ is an almost isomorphism by Scholze's vanishing theorem \cite[Proposition 6.14]{ScholzePerfectoidSpaces}, so $b$ is almost split. 
\end{proof}



\renewcommand{\baselinestretch}{1.4}

\end{document}